\newcounter{constant}
\newcommand{\nc}[1]{\refstepcounter{constant}\label{#1}}
\newcommand{\uc}[1]{c_{\textnormal{\tiny \ref{#1}}}}
\newtheorem{theo}{Theorem}[section]
\newtheorem{lemma}[theo]{Lemma}
\newtheorem{cor}[theo]{Corollary}
\numberwithin{equation}{section} 
\theoremstyle{definition}
\newtheorem{question}[theo]{Question}
\newcommand{\R}{\mathbb{R}}
\newcommand{\N}{\mathbb{N}}
\newcommand{\Z}{\mathbb{Z}}
\newcommand{\charf}[1]{\mathbf{1}_{#1}}
\DeclareMathOperator{\dist}{dist}
\begin{document}

\title[Fire retainment on Cayley graphs]{Fire retainment on Cayley graphs}

\author{Gideon Amir}
\address{Bar-Ilan University, Ramat Gan 52900, Israel.}
\email{gidi.amir@gmail.com}

\author{Rangel Baldasso}
\address{Leiden University, 2300 RA Leiden, The Netherlands.}
\email{r.baldasso@math.leidenuniv.nl}

\author{Maria Gerasimova}
\address{Westfälische Wilhelms-Universität Münster, 48149 Münster, Germany.}
\email{mari9gerasimova@mail.ru}

\author{Gady Kozma}
\address{The Weizmann Institute of Science, Rehovot 76100, Israel.}
\email{gady.kozma@weizmann.ac.il}

\begin{abstract}
  We study the fire-retaining problem on groups, a quasi-isometry invariant\footnotemark\ introduced by Mart\'{i}nez-Pedroza and Prytu{\l}a~\cite{mpp}, related to the firefighter problem.
  We prove that any Cayley graph with degree-$d$ polynomial growth does not satisfy $\{f(n)\}$-retainment, for any $f(n) = o(n^{d-2})$, matching the upper bound given for the firefighter problem for these graphs.
  In the exponential growth regime we prove general lower bounds for direct products  and wreath products.
  These bounds are tight, and show that for exponential-growth groups  a wide variety of behaviors is possible. In particular, we construct, for any $d\geq 1$, groups that satisfy $\{n^{d}\}$-retainment but not $o(n^d)$-retainment, as well as groups that do not satisfy sub-exponential retainment.
  
  \medskip
  
  \noindent
  \emph{Keywords and phrases.} Fire containment; Cayley graphs.

  \noindent
  MSC 2010: 05C63, 05C57, 20F65, 05C10.
\end{abstract}
	
	\maketitle


\section{Introduction}
\stepcounter{footnote}
\footnotetext{
  See Section~\ref{s:notation} for some caveats.}

\par Let $G$ be an infinite connected graph and $\{f(n)\}$ be a sequence of positive integers.
  Assume that a fire breaks out in a finite subset $F_{0}$ of the vertices of $G$.
  At each unit of time $n$, at most $f(n)$ vertices that are not on fire are declared protected and the fire spreads to all non-protected vertices that neighbor at least one vertex already on fire.
  Denote by $U$ the collection of vertices that are never on fire.
  The graph $G$ is said to have the $\{f(n)\}$-retaining property if, for every choice of initially burnt vertices $F_{0}$, there exists a strategy for protecting at most $\{f(n)\}$ vertices at step $n$ such that the set $U$ has growth function equivalent to the growth function of $G$.
  
We use the standard equivalence relation between functions in which $f\preceq g$ if there exists a constant $C>0$ s.t. $f(n)\leq C g(Cn)+C$ for all $n$ and $f\simeq g$ if $f\preceq g$ and $g\preceq f$. It is well-known that up to this equivalence relation, the growth rate of a group is a quasi-isometry invariant.

  The notion of fire retaining is in fact a quasi-isometry geometric invariant (for monotone non-decreasing functions $f$) that was introduced by Mart\'{i}nez-Pedroza and Prytu{\l}a~\cite{mpp} as a variant of the firefighter problem first proposed by Hartnell~\cite{hartnell}.
  In the firefighter problem, one looks for strategies such that the set $G \setminus U$ of vertices that are eventually on fire is finite.
  Whenever this is possible for every finite initial set $F_{0}$, one says that $G$ has the $\{f(n)\}$-containment property.
  Of course, if $G$ has the $\{f(n)\}$-containment property, it also has the $\{f(n)\}$-retainment property.

  The graph $G$ is said to satisfy polynomial containment of degree $d$ (respectively, polynomial retainment of degree $d$) if it has the $\{f(n)\}$-containment property (respectively, $\{f(n)\}$-retainment property) for $f(n)=Kn^{d}$, for some $K \geq 0$.
  Dyer, Mart{\'\i}nez-Pedroza, and Thorne~\cite{dmt} established that having polynomial containment of degree $d$ is a quasi-isometry invariant, while the analogous statement for polynomial retainment of degree $d$ was proved in~\cite{mpp}.

  The following result was established in~\cite{dmt}.
\begin{theo}\textnormal{(\cite[Theorem 2.3]{dmt})}
  Let $G$ be a connected graph with polynomial growth of degree at most $d$.
  Then $G$ satisfies polynomial containment of degree $d-2$.
\end{theo}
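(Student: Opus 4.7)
The plan is to \emph{contain the fire by building a protective sphere} around $F_{0}$, choosing its radius so that the polynomial growth bound on the sphere size matches the cumulative budget $\sum K n^{d-2}$. The pigeonhole argument on an annulus is what reconciles sphere size ($O(R^{d-1})$) with budget ($O(R^{d-1})$); using a single sphere of fixed radius would only yield the weaker bound $O(R^d)$ and force $K$ to grow with $R$.

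Fix a finite initial fire $F_{0}$ and set $R_{0} = \max(1, \operatorname{diam}(F_{0}))$. By polynomial growth of degree at most $d$ there is a constant $C_{0} = C_{0}(G)$ with $|B(v,n)| \leq C_{0} n^d$ for every vertex $v$ and every $n \geq 1$. Fixing any $v_{0} \in F_{0}$, the triangle inequality gives $B(F_{0}, 2R_{0}) \subseteq B(v_{0}, 3R_{0})$, hence $|B(F_{0}, 2R_{0})| \leq 3^d C_{0} R_{0}^d$. The annulus $B(F_{0}, 2R_{0}) \setminus B(F_{0}, R_{0})$ is the disjoint union of the $R_{0}$ spheres $S(F_{0}, r) = B(F_{0}, r) \setminus B(F_{0}, r-1)$ for $r \in \{R_{0}+1, \ldots, 2R_{0}\}$, so pigeonhole produces some $r^{\star}$ in this range with
\[
|S(F_{0}, r^{\star})| \;\leq\; 3^d C_{0} R_{0}^{d-1}.
\]

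Every vertex in $S(F_{0}, r^{\star})$ is at distance exactly $r^{\star}$ from $F_{0}$, so the fire cannot reach it before time $r^{\star}$; since the convention is to protect before the fire spreads, time $r^{\star}$ itself is usable for protection. Over the $r^{\star}$ steps available the cumulative budget is
\[
\sum_{n=1}^{r^{\star}} K n^{d-2} \;\geq\; c_{d} K R_{0}^{d-1},
\]
for some $c_{d} > 0$ depending only on $d$ (one may take $c_{d} = 1/(d-1)$ for $d \geq 3$ and $c_{2} = 1$). Taking $K$ large enough in terms of $C_{0}$ and $d$ --- hence depending only on $G$, as required by the definition of containment --- makes the right-hand side at least $|S(F_{0}, r^{\star})|$, and the strategy is then to list the sphere in any order and protect the next $\lfloor K n^{d-2} \rfloor$ vertices on the list at each step $n \leq r^{\star}$. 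After step $r^{\star}$ the whole sphere is protected, so the fire is confined to $B(F_{0}, r^{\star})$, a finite set.

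The essential mechanism is the pigeonhole estimate that trades the trivial sphere bound $O(R^d)$ for the tighter $O(R^{d-1})$ matching the budget. The choice $R_{0} = \max(1, \operatorname{diam}(F_{0}))$ is what removes any $|F_{0}|$-dependence from the constant, so $K$ depends only on $G$ and $d$. The remaining points --- the protect-then-spread convention and producing an explicit ordering of $S(F_{0}, r^{\star})$ --- are routine bookkeeping, and the argument applies for $d \geq 2$ (the case $d = 1$ being vacuous since $f(n) = K n^{-1}$ is not a meaningful budget).
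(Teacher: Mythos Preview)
The paper does not give its own proof of this statement: it is quoted as \cite[Theorem~2.3]{dmt} and simply cited without argument. So there is nothing in the present paper to compare your attempt against.

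That said, your proof is correct and is essentially the standard argument (and, to my knowledge, the one in~\cite{dmt}): pigeonhole on the annulus $B(F_0,2R_0)\setminus B(F_0,R_0)$ to find a sphere of size $O(R_0^{d-1})$, then observe that the cumulative budget $\sum_{n\le r^\star} Kn^{d-2}$ is also of order $R_0^{d-1}$, so a uniform $K$ depending only on the growth constant of $G$ suffices. Your handling of the constants is clean; in particular, setting $R_0=\max(1,\operatorname{diam}(F_0))$ is exactly what makes $K$ independent of $F_0$, and the protect-then-spread convention in the paper lets you use all of steps $1,\dots,r^\star$ as you claim.
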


  Develin and Hartke~\cite{dh} conjectured the converse should hold for the $d$-dimensional integer lattice, and this was recently verified to hold for Cayley graphs of groups with polynomial growth by Amir, Baldasso, and Kozma~\cite{abk}.

\begin{theo}\label{t:no_polynomial_containment}\textnormal{(\cite[Theorem 1]{abk})}
  Let $G$ be the Cayley graph of a group with polynomial growth of degree at least $d \geq 2$.
  Then $G$ does not have the $\{f(n)\}$-containment property, for any $f(n)=o(n^{d-2})$.
\end{theo}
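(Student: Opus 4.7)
The plan is to start the fire on a large ball and then show by induction that the burnt set has volume at least $C(t+R)^d$ at time $t$, so that containment is impossible.

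Fix $R$ large (to be chosen in terms of $f$) and set $F_{0} = B(e, R)$, so $V_{0} := |F_{0}| \geq c_{1} R^{d}$ by polynomial growth. Write $\overline{F}_{t}$ for the set of burnt vertices at time $t$ and $V_{t} = |\overline{F}_{t}|$. At each step, every unprotected neighbour of $\overline{F}_{t}$ becomes burnt, hence
\[
V_{t+1} \geq V_{t} + |\partial_{\mathrm{out}}\overline{F}_{t}| - \sum_{s=1}^{t+1} f(s),
\]
because the total number of vertices ever protected is bounded by the cumulative firefighter budget. The Coulhon--Saloff-Coste isoperimetric inequality for Cayley graphs of polynomial growth of degree at least $d$ gives $|\partial_{\mathrm{out}} A| \geq c_{2}|A|^{(d-1)/d}$ for every finite $A \subseteq G$, so
\[
V_{t+1} \geq V_{t} + c_{2} V_{t}^{(d-1)/d} - \sum_{s=1}^{t+1} f(s).
\]

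The heart of the argument is an induction showing $V_{t} \geq C(t+R)^{d}$ for all $t \geq 0$, for a small enough $C$ and $R$ large enough. The base case follows from $V_{0} \geq c_{1}R^{d}$ on picking $C \leq c_{1}$. For the inductive step, plugging $V_{t} \geq C(t+R)^{d}$ into the recursion produces a gain of at least $c_{2}C^{(d-1)/d}(t+R)^{d-1}$ from the isoperimetric term, while $C(t+R+1)^{d} - C(t+R)^{d} = dC(t+R)^{d-1} + O((t+R)^{d-2})$. Choosing $C < (c_{2}/d)^{d}$ makes $A := c_{2}C^{(d-1)/d} - dC$ a strictly positive constant. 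The assumption $f(s) = o(s^{d-2})$ yields $\sum_{s=1}^{t+1} f(s) \leq M_{\epsilon} + \epsilon(t+R)^{d-1}/(d-1)$ for any $\epsilon > 0$ and a suitable constant $M_{\epsilon}$; picking $\epsilon$ small (in terms of $A$) and $R$ large (in terms of $M_{\epsilon}$ and the $O$-constant) makes $A(t+R)^{d-1}$ dominate $\sum_{s} f(s)$ and the $O((t+R)^{d-2})$ correction uniformly in $t \geq 0$. Iterating, $V_{t} \to \infty$, contradicting the assumed containment.

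The main obstacle is the quantitative bookkeeping: one must first fix $C \leq \min(c_{1},\,(c_{2}/d)^{d})$ to secure simultaneously the base case and a positive isoperimetric surplus $A$, and only then choose $R$ depending on how fast $f(s)/s^{d-2} \to 0$ so that the surplus beats the firefighter budget even in the ``early'' regime $t \leq T_{\epsilon}$, where the $o$-estimate is not yet effective. A secondary subtlety is that the isoperimetric inequality used must hold with a uniform constant, but in a Cayley graph of polynomial growth of degree at least $d$ such a constant is available at all scales $\gtrsim 1$, and our induction operates at scales $\gtrsim R$, where no small-scale deficit can occur.
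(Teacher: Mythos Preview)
The paper does not actually prove this theorem: it is quoted from \cite{abk}, and the only proof-adjacent content is the unnamed lemma in Section~3, which states the quantitative bound $|F_n|\ge c\,n^{d}$ and whose proof is deferred with ``This is proved exactly as in~\cite{abk} (see the proof of Theorem~1, and, in particular, Equation~(4) there). We skip the details.'' So there is no in-paper proof to compare against, only the indication that the \cite{abk} argument produces exactly the polynomial lower bound on $|F_n|$ that you establish.

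Your argument is correct and is the standard one: start the fire on a large ball, use the Coulhon--Saloff-Coste isoperimetric inequality $|\partial A|\ge c_2|A|^{(d-1)/d}$ (valid on any Cayley graph with growth $\succeq n^{d}$), and close the recursion $V_{t+1}\ge V_t+c_2V_t^{(d-1)/d}-\sum_{s\le t+1}f(s)$ by induction on the hypothesis $V_t\ge C(t+R)^d$. The bookkeeping you describe (first fix $C$ small so that $c_2C^{(d-1)/d}>dC$, then fix $R$ large to absorb the finite transient before the $o(n^{d-2})$ bound kicks in and the $O((t+R)^{d-2})$ error) is exactly what is needed. Since the paper's lemma singles out precisely this conclusion, $|F_n|\ge c\,n^{d}$, as ``Equation~(4)'' of \cite{abk}, your approach almost certainly coincides with the cited proof rather than offering a genuinely different route.
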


  Our first result in the present paper is an analogue to Theorem~\ref{t:no_polynomial_containment} for the retaining property, which in particular answers Question 1.5 of~\cite{mpp}.
\begin{theo}\label{t:fire_retaining}
  Let $G$ be the Cayley graph of a group with polynomial growth of order $d$.
  Then $G$ does not have the $\{f(n)\}$-retaining property for any $f(n)=o(n^{d-2})$.
\end{theo}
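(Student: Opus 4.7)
The plan is to take an initial fire $F_0 = B_R(v_0)$ with $R$ chosen large (depending on $f$) and to show via an isoperimetric argument that the fire's volume satisfies $V(t) \geq (1-o(1))|B_t|$ as $t\to\infty$, whence $|U\cap B_n| = o(n^d)$, violating the growth-equivalence condition with $G$. The choice of $R$ will be made so that the firefighter's early cumulative budget $\sum_{k\leq R}f(k) = o(R^{d-1})$ is much smaller than the initial boundary $|\partial B_R|\asymp R^{d-1}$, which prevents the firefighter from cauterising $\partial F_0$ and forces the fire to propagate roughly as a ball.

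First I would invoke the sharp asymptotic isoperimetric inequality in polynomial growth groups: the infimum of $|\partial A|/|A|^{(d-1)/d}$ tends to $d\, c_{\mathrm{vol}}^{1/d}$ for large $|A|$, where $|B_n|\sim c_{\mathrm{vol}} n^d$ (this follows from Coulhon--Saloff-Coste combined with asymptotic volume growth). Together with the bound $\sum_{k\leq t+1}f(k) = o(t^{d-1})$, a standard discrete differential inequality for the fire volume, namely $V(t+1)\geq V(t)+cV(t)^{(d-1)/d}-o(t^{d-1})$, yields after the substitution $W=V^{1/d}$ that $W(t)\geq W(0)+c_{\mathrm{vol}}^{1/d}\,t-o(t)$, hence $V(t)\geq c_{\mathrm{vol}}\, t^d(1-o(1))=|B_t|(1-o(1))$. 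Since the fire lies in $B_{R+t}$ and $|B_{R+t}\setminus B_t|=o(t^d)$ for fixed $R$ and large $t$, I would deduce $|\mathrm{fire}\cap B_n|\geq|B_n|(1-o(1))$, and thus $|U\cap B_n|=o(n^d)$, so the growth function of $U$ is strictly slower than that of $G$ and $\{f(n)\}$-retainment fails.

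The hardest step will be establishing the sharp isoperimetric constant, which in an arbitrary polyn
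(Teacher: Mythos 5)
Your route is genuinely different from the paper's, but it has a gap that I think is fatal as written.

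The paper does \emph{not} try to show that the fire fills $B_n$. Instead it argues by contradiction from the retainment hypothesis: if both $|F_n\cap B_n|\gtrsim n^d$ (from an initialization lemma, imported from~\cite{abk}) and $|U\cap B_n|\gtrsim n^d$ (from assumed retainment), then a \emph{relative} isoperimetric inequality -- a lower bound on $|B_{3n}\cap\partial F_k|$ in terms of $|F_k\cap B_n|\cdot|B_n\setminus F_k|$, proved via an $L^1$-Poincar\'e inequality on balls -- forces $|B_{3n}\cap\partial F_k|\gtrsim n^{d-1}$ for all $k\geq n$. Since the cumulative firefighting budget is $o(n^{d-1})$, almost all of this boundary must burn at each step, so the fire volume inside $B_{3n}$ increases by $\gtrsim n^{d-1}$ per step; after $O(n)$ steps it would exceed $|B_{3n}|$, a contradiction. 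No sharp constants and no asymptotic optimality of balls are needed.

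Your approach instead relies on the assertion that $\inf_A |\partial A|/|A|^{(d-1)/d}\to d\,c_{\mathrm{vol}}^{1/d}$, i.e.\ that balls are asymptotically extremal for the (global) isoperimetric problem. This is not a theorem for Cayley graphs of polynomial growth, and it is in fact sensitive to the choice of generating set and of boundary notion. Already in $\Z^d$ with the standard generators and edge boundary, the isoperimetric minimizers are cubes, not $\ell^1$-balls, and the optimal constant is strictly smaller than the ``ball constant''; for general polynomial-growth groups the asymptotic cone is a Carnot group whose isoperimetric problem (e.g.\ Pansu's conjecture for the Heisenberg group) is notoriously unresolved. With only a generic Coulhon--Saloff-Coste constant $c_0<d\,c_{\mathrm{vol}}^{1/d}$, your differential inequality yields $V(t)\geq c_1 t^d$ with $c_1<c_{\mathrm{vol}}$, and then $|F_n|\geq c_1 n^d$ together with $|U\cap B_n|\geq c_2 n^d$ is perfectly consistent with $|B_{R+n}|\approx c_{\mathrm{vol}} n^d$ as long as $c_1+c_2<c_{\mathrm{vol}}$; no contradiction follows. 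In short, you need either the (unavailable) sharp constant, or -- as the paper does -- you should stop trying to push the fire to fill the whole ball, assume retainment, and exploit a relative isoperimetric inequality inside $B_{3n}$ to get the interface lower bound directly. The paper explicitly flags this distinction in the discussion preceding Lemma~\ref{lemma:isoperimetry}: the quantity you need to bound below is the part of the boundary that lies \emph{inside} a ball, and the global isoperimetric inequality does not control it.
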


  The result above in particular implies that the notions of fire retainment and fire containment behave similarly on Cayley graphs of groups with polynomial growth.

\bigskip
  The fire-retaining problem on Cayley graphs with faster-than-polynomial growth presents a richer behavior due to the lack of monotonicity, since the fact that $G$ has the $\{f(n)\}$-retainment property does not necessarily imply an analogous statement for subgraphs of $G$.
  In particular, the two notions of fire retainment and fire containment can behave very differently.
  For free groups, for example, the fire-retaining problem is solved by protecting only one vertex, while fire containment can only be achieved by functions $f$ with exponential growth (\cite{ln}).
  More than that, it is not hard to see that the product $F_{k} \times \Z^{d}$ between the free group with $k$ generators and $\Z^{d}$ has polynomial retainment of degree $d-1$.
  The second contribution of the current paper is a general lower bound for retainment in direct products, which in particular implies that for  $F_{k} \times \Z^{d}$ this is indeed the best possible.

\begin{theo}\label{t:product_groups}
  Let $G$ and $H$ be groups with exponential and subexponential-growth, respectively.
  The group $G \times H$ does not have the $\{f(n)\}$-retainment property for any function $f$ satisfying
\begin{equation}\label{eq:slow_growth_f}
  \sum_{k=1}^{10n}f(k) = o(v_{H}(n)),
\end{equation}
where $v_{H}(n) = |B^{H}_{n}|$ denotes the growth function of $H$.
\end{theo}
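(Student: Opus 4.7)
The goal is to exhibit an initial fire $F_0 \subseteq V(G \times H)$ such that for every defender strategy with $\sum_{k=1}^{10n} f(k) = o(v_H(n))$ the never-burnt set $U$ has sub-exponential growth and thus cannot match the (exponential) growth of $G \times H$.

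I would take $F_0 = B_{M_0}^{G \times H}$, where $M_0$ is chosen large enough that the sphere $|\partial_V B_{M_0}^{G \times H}|$ exceeds the cumulative budget $\sum_{k=1}^{10M_0} f(k)$. Such $M_0$ exists because $G \times H$ inherits exponential sphere growth from $G$, whereas the cumulative budget up to time $10M_0$ is sub-exponential in $M_0$ under the hypothesis (using that $v_H$ is itself sub-exponential). With this choice, the defender cannot contain the fire inside any bounded region: doing so would require protecting a full cut around the current fire front, but the front grows exponentially while the budget grows at most sub-exponentially. Consequently the fire eventually becomes infinite and, more strongly, covers an exponentially growing subset of $G \times H$.

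Next I would set up the standard structural framework. Let $P$ denote the set of all eventually-protected vertices; then $U = P \cup \bigsqcup_{C} C$, where $C$ ranges over the connected components of $V(G \times H) \setminus P$ disjoint from $F_0$. For each such ``saved'' component $C$, the inner vertex boundary $\partial^{\mathrm{in}} C := \{v \in C : \exists u \notin C, u \sim v\}$ lies entirely in $P$, since any vertex of $C$ adjacent to a burning vertex must have been protected before its neighbor caught fire. If $U$ is to have exponential growth, some saved component $C$ must have $|C \cap B_n^{G \times H}|$ of exponential order, and then $P$ contains a vertex cut $\partial^{\mathrm{in}} C$ that partitions $G \times H$ into at least two infinite, exponentially large pieces.

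The heart of the proof is an isoperimetric inequality for the product: any vertex cut of $G \times H$ which separates it into two pieces of exponential growth must intersect $B_{10n}^{G \times H}$ in at least $c\, v_H(n)$ vertices, for some $c>0$. I would prove this by projecting onto the $H$-factor. For each $g \in G$ the column $\{g\} \times H$ is a copy of $H$, and the cut either partitions the column into two infinite $H$-pieces (contributing at least a column cut of $H$) or leaves the column wholly on one side, in which case the resulting $G$-level partition of column-sides must itself be cut-rich. A careful case analysis --- summing cut vertices column by column, controlling the columns where $A$ and $B$ switch sides, and using the connectedness of $H$-balls --- yields the $\Omega(v_H(n))$ bound within $B_{10n}^{G \times H}$; the factor~$10$ provides slack for paths that detour around protected vertices before reaching the boundary of $C$.

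Combining the two ingredients: the budget bound $|P \cap B_{10n}^{G \times H}| \leq \sum_{k=1}^{10n} f(k) = o(v_H(n))$ rules out the existence of such a cut. Hence no saved component can carry exponential mass, and summing the (sub-exponential) contribution of each saved component together with $|P \cap B_n^{G \times H}|$, which is itself sub-exponential, shows $|U \cap B_n^{G \times H}|$ is sub-exponential in $n$, contradicting $\{f(n)\}$-retainment.

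The principal obstacle is the isoperimetric inequality, especially for amenable $G$ of exponential growth such as $\Z \wr \Z$, where classical Cheeger-type inequalities do not apply; the column projection onto $H$ must be executed so as to cover all such cases uniformly. A secondary technical point is justifying that the radius $10n$ in the hypothesis is genuinely sufficient, which amounts to bounding detour times for fire circumnavigating the eventual cut by a constant multiple of $n$.
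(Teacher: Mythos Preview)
Your approach is genuinely different from the paper's, and it has a real gap that is not just the unproved isoperimetric step you flag.

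The paper does not argue via cuts of the eventual never-burnt set $U$. Instead it fixes a time scale $n$, takes $P_n=\bigcup_{k\le 10n}W_k$, and chases the fire along specific fibers of the product: first find (by pigeonhole, since $|P_n|=o(v_H(n))$ while the fire in $G\times\{e_H\}$ is already exponential) a column $\{x\}\times B^H_n$ with no protections in $P_n$, so that by time $3n$ it is fully on fire; then find a row $G\times\{y\}$ with no protections in $P_n$, so that by time $5n$ all of $B^G_n\times\{y\}$ burns; finally observe that any $(a,b)\in B^G_n\times B^H_n$ still unburnt at time $9n$ forces a protection in $\{a\}\times H\cap P_n$, giving $|U\cap(B^G_n\times B^H_n)|\le v_H(n)\,|P_n|=o(v_H(n)^2)$. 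All the protection-counting happens against $P_n$, the protections made \emph{by time $10n$}, and this is why the hypothesis is stated with a sum up to $10n$.

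Your argument, by contrast, works with the full protected set $P=\bigcup_{k\ge 1}W_k$ and asserts $|P\cap B_{10n}^{G\times H}|\le\sum_{k\le 10n}f(k)$. That inequality is not true in general: a vertex inside $B_{10n}$ can be protected at an arbitrarily late time, for instance if it sits behind earlier walls and the fire only reaches its vicinity long after step $10n$, or even if it is never threatened at all. No ``protect only at the fire front'' WLOG repairs this, because the fire can legitimately take far more than $10n$ steps to reach parts of $B_{10n}$. So the budget bound you feed into the isoperimetric inequality is unjustified, and without it the contradiction does not close. The paper's fiber argument sidesteps this entirely by comparing the fire and the protections at matched times.

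Two smaller points. First, the detour through a single exponential component $C$ is both unnecessary and unjustified: $\partial^{\mathrm{in}}U\subseteq P$ already (every unburnt vertex with a burnt neighbour is protected), so you could apply your isoperimetric claim directly to $U$; conversely, if you insist on components, you have not ruled out exponentially many sub-exponential components summing to something exponential. Second, the isoperimetric statement itself (a cut separating two exponential pieces of $G\times H$ must meet $B_{10n}$ in $\gtrsim v_H(n)$ vertices) is plausible and your column-projection sketch points in the right direction, but the paper's explicit fiber argument is effectively a constructive proof of exactly the special case needed, with the timing built in; proving your version in the generality you state would still leave the timing gap above.
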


  As a corollary, we obtain a negative result for retainment in the case when $H$ has polynomial growth.
\begin{cor}
  Assume that $G$ has exponential growth and $H$ has polynomial growth of degree $d$.
  Then $G \times H$ does not have the $\{f(n)\}$-retainment property for any function $f$ satisfying $f(n) = o(n^{d-1})$.
\end{cor}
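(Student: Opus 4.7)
The plan is simply to verify the hypothesis~\eqref{eq:slow_growth_f} of Theorem~\ref{t:product_groups} under the stated assumptions, and then invoke that theorem as a black box.

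First I would record the standard fact that a group $H$ of polynomial growth of degree $d$ satisfies $v_{H}(n) \asymp n^{d}$; in particular there is a constant $c > 0$ such that $v_{H}(n) \geq c n^{d}$ for all $n$ sufficiently large. (For Cayley graphs of finitely generated groups this follows from Bass's formula, but an upper/lower polynomial bound is all that is needed here.)

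Next, given $f$ with $f(n) = o(n^{d-1})$, I would bound the partial sum $\sum_{k=1}^{10n} f(k)$. Fix $\varepsilon > 0$; by the $o$-assumption there exists $N = N(\varepsilon)$ such that $f(k) \leq \varepsilon k^{d-1}$ for all $k \geq N$. Splitting the sum at $N$ gives
\begin{equation*}
  \sum_{k=1}^{10n} f(k) \;\leq\; \sum_{k=1}^{N-1} f(k) \;+\; \varepsilon \sum_{k=N}^{10n} k^{d-1} \;\leq\; C_{N} + \varepsilon \cdot C' n^{d},
\end{equation*}
for a constant $C'$ depending only on $d$. Dividing by $v_{H}(n) \geq c n^{d}$ and letting $n \to \infty$ yields $\limsup_{n} \sum_{k=1}^{10n} f(k) / v_{H}(n) \leq C' \varepsilon / c$, and since $\varepsilon$ was arbitrary this limsup is $0$. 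Hence $\sum_{k=1}^{10n} f(k) = o(v_{H}(n))$, exactly the assumption~\eqref{eq:slow_growth_f}.

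Finally I would apply Theorem~\ref{t:product_groups} to conclude that $G \times H$ does not have the $\{f(n)\}$-retainment property. There is no real obstacle here; the proof is a routine verification that the polynomial-growth case is covered by the hypothesis on the partial sum, and the entire content is in Theorem~\ref{t:product_groups}.
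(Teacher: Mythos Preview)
Your proposal is correct and matches the paper's intent: the corollary is stated without proof immediately after Theorem~\ref{t:product_groups}, so the paper clearly regards it as an immediate application of that theorem once one checks that $f(n)=o(n^{d-1})$ forces $\sum_{k=1}^{10n} f(k)=o(n^d)=o(v_H(n))$. Your verification of this is routine and complete.
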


  In Theorem~\ref{t:product_groups}, we do not have any restrictions for the retainment of the group $G$.
  It might be, as in the case of the free group $F_k$, that only a constant number of vertices needs to be protected in order to guarantee fire retaining. One may hope that if more vertices are needed to attain fire-retaining on $G$, then this can be translated into a better lower bound for $G \times H$. This prompts us to propose the following question:
\begin{question}\label{q:product_groups_2}
  Assume that $G$ and $H$ are groups with exponential and subexponential growth, respectively.
  Assume that $G$ does not have the $\{g(n)\}$-retainment property and let $f$ be a function satisfying~\eqref{eq:slow_growth_f}.
  Under these hypotheses, is it true that $G \times H$ does not have the $\{f(n) \cdot g(n)\}$-retaining property?
\end{question}
An interesting special case of Question \ref{q:product_groups_2} is that of taking a direct product of $\Z$ with the hyperbolic disc. The hyperbolic disk itself has a simple strategy that allows to retain a constant proportion of the disc by protecting only 2 vertices in each turn, by simply following arbitrary different geodesics to infinity (see Figure \ref{fig:hyp}). It is well-known there are many groups, such as fundamental groups of a surface groups of genus $g\geq 2$ that are quasi-isometric to the hyperbolic disc (see e.g.~\cite{Kapovich_boundariesof}). Thus by quasi-isometry invariance of fire-retaining there are (hyperbolic) groups of exponential growth for which protecting a constant number of vertices each round is enough for retaining, but protecting only a finite amount of vertices overall is not.
\begin{figure}[b]
\includegraphics[scale=0.5]{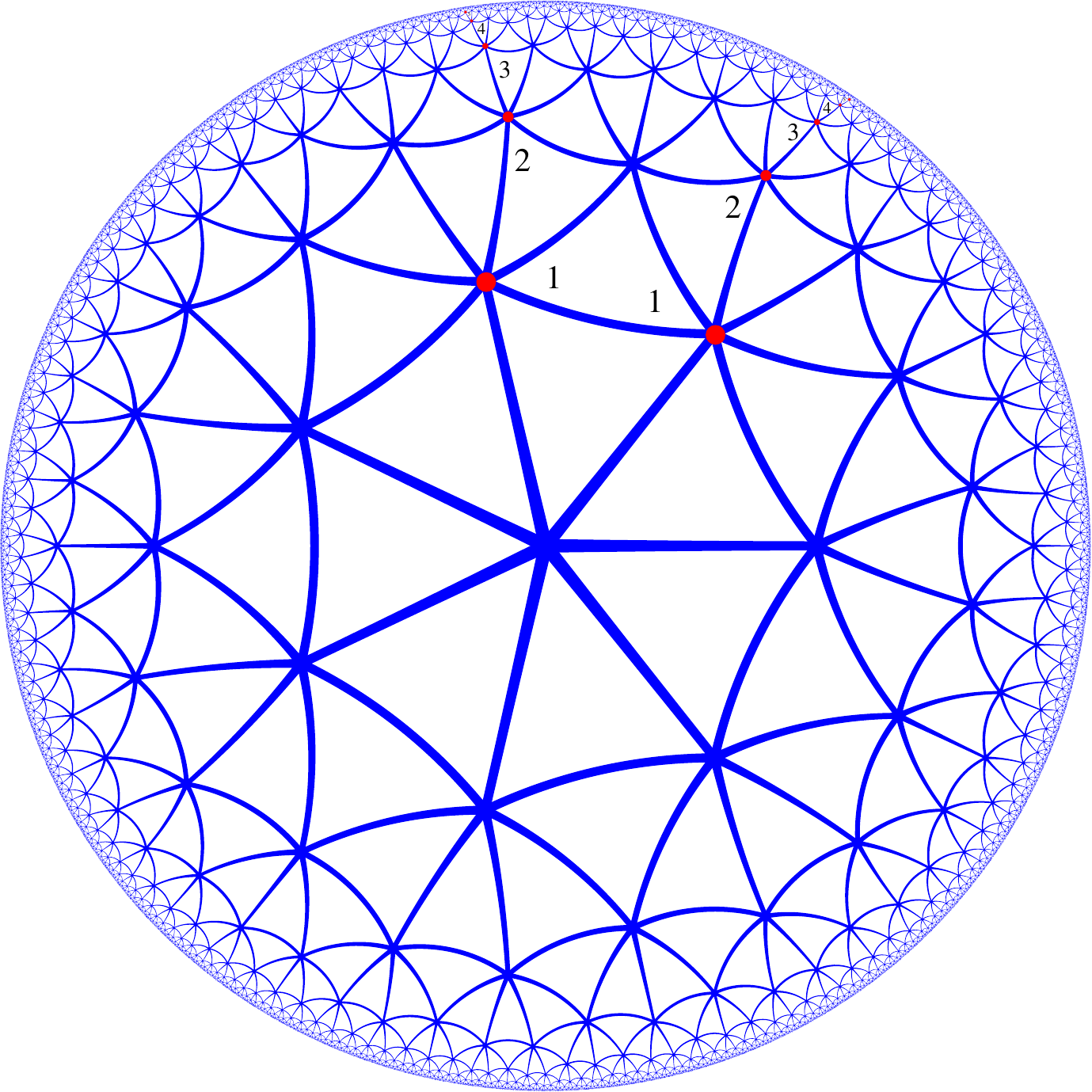}
\caption{\label{fig:hyp}The hyperbolic disk - a seven regular triangulation. The vertices in red are (the beginning of) the set of vertices we protect, and the numbers indicate on which turn we protect them. 
}
\end{figure}

  We say that a function $f$ has subexponential growth if
\begin{equation}\label{eq:slow_growth_f_2}
\lim_{n} \frac{1}{n} \log f(n) =0.
\end{equation}
When both $G$ and $H$ have exponential growth, we prove that the product does not have the $\{f(n)\}$-retaining property, for any $f$ with subexponential growth.
\begin{theo}\label{t:product_groups_3}
  If $G$ and $H$ are groups with exponential growth, then $G \times H$ does not have the $\{f(n)\}$-retainment property for any function $f$ with subexponential growth.
\end{theo}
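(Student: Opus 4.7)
The plan is to take the initial fire $F_0 = B_R^K$ to be a ball of suitably large radius $R$ (depending on $f$) in $K = G \times H$, and to show that any firefighter strategy with subexponential budget $f$ leaves an unburnt set $U$ of subexponential growth, contradicting retainment. The basic identity I would use is $U \subseteq P \cup \{v : v$ is separated from $F_0$ in $K \setminus P\}$, where $P$ is the cumulative protection set; this holds because any path from $F_0$ to $v$ avoiding $P$ is followed by the fire and makes $v$ burn. The goal is thus to show $|U \cap B_N^K| = e^{o(N)}$.

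The argument would proceed through three ingredients. First, a \emph{budget bound}: at horizon $T$, $|P_T| \leq Tf(T) = e^{o(T)}$ by subexponentiality of $f$. Second, \emph{one-endedness}: since $G$ and $H$ are both infinite, the product $K$ is one-ended, and since $K$ has exponential growth any set separating $F_0$ from infinity must have at least exponential cardinality; a subexponential $P$ therefore leaves $F_0$ in the unique infinite component of $K \setminus P$, and every other component is finite. Third, an \emph{isoperimetric estimate}: by Coulhon--Saloff-Coste, both $I_G$ and $I_H$ satisfy $I(n) \gtrsim n/\log n$, and these combine through the product decomposition $S = \bigsqcup_h (S_h \times \{h\})$ (and the bound $|\partial S| \geq \sum_h |\partial_G S_h|$) to yield $I_K(n) \gtrsim n/\log n$. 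Consequently, the total size of finite components of $K \setminus P$ with boundary inside $P$ is at most $|P|^{1+o(1)}$.

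I would apply these three ingredients at the natural time horizon $T = r - R$ for each sphere $S_r^K$ (when the fire first reaches $S_r$ from $F_0$), obtaining $|U \cap S_r| \leq |P_{r-R}| + |P_{r-R}|^{1+o(1)} = e^{o(r)}$. Summing over $R \leq r \leq N$ gives $|U \cap B_N^K| \leq (N - R) \cdot e^{o(N)} = e^{o(N)}$, the desired subexponential bound. The radius $R$ is chosen so that the firefighter cannot simply contain the initial fire at time $1$: since $|S_R^K| \gtrsim e^{\alpha R}$ grows exponentially, any fixed $R$ larger than $\log(f(1))/\alpha$ suffices.

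The hardest step will be the isoperimetric estimate in the amenable regime (e.g., when $G$ or $H$ is a wreath product such as $\mathbb{Z}_2 \wr \mathbb{Z}$), where Cheeger's inequality fails and one has to carefully combine the Folner profiles of the two factors through the product structure. The exponential growth of \emph{both} factors is essential here: it is precisely what drives the super-polynomial isoperimetric profile of $K$, which in turn makes the shadow of a subexponential $P$ subexponential. A secondary technical point is to bound the ``detour'' contribution to $|U \cap S_r|$ coming from vertices that are not yet reached by the fire at time $r-R$ because $P_{r-R}$ forces short paths to detour around it; this too reduces to the isoperimetric estimate, exploiting the abundance of short paths in $K$ when both factors grow exponentially.
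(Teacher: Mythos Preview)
Your argument has a genuine gap at the ``time horizon'' step. You assert that
\[
  |U\cap S_r|\le |P_{r-R}|+|P_{r-R}|^{1+o(1)},
\]
the second term accounting for vertices separated from $F_0$ in $K\setminus P_{r-R}$. But the inclusion you quote, $U\subseteq P\cup\{v:\text{$v$ separated from $F_0$ in $K\setminus P$}\}$, is only valid for $P=P_\infty=\bigcup_n W_n$, and $P_\infty$ is infinite, so the isoperimetric bound on the finite components is vacuous. If instead you use $P=P_{r-R}$, the inclusion fails: a vertex $v\in S_r$ can lie in the same component of $K\setminus P_{r-R}$ as $F_0$ and still belong to $U$, because every path from $F_0$ to $v$ inside $K\setminus P_{r-R}$ may have length strictly greater than $r-R$, and by the time the fire traverses such a detour more protection has been laid down. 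You flag this as a ``secondary technical point'' and say it ``reduces to the isoperimetric estimate'', but the Coulhon--Saloff-Coste profile $I(n)\gtrsim n/\log n$ bounds boundaries of sets, not lengths of detours after vertex removal; there is no mechanism in your outline that converts the former into the latter. (A related imprecision: the claim that ``any set separating $F_0$ from infinity must have at least exponential cardinality'' is false as stated---the sphere $\partial B_R$ separates $B_R$ from infinity and has fixed finite size.)

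The paper's proof avoids this difficulty entirely by exploiting the product structure directly rather than through isoperimetry. It chooses the initial fire so that the restriction to each of the slices $G\times\{e_H\}$ and $\{e_G\}\times H$ already grows exponentially (this is the paper's Corollary on exponential spread, a consequence of Lehner's non-containment result). Then, by pigeonhole over the at most $\sum_{k\le 10n}f(k)=e^{o(n)}$ protected vertices, one finds an entirely unprotected fiber $\{x\}\times B_n^H$, lets the fire fill it, then finds an unprotected slice $B_n^G\times\{y\}$, fills that, and so on. After a bounded number of such alternations one shows that any $(a,b)\in B_n^G\times B_n^H$ not on fire by time $10n$ must have both coordinates in the projection of the protected set, giving $|U\cap B_n^K|\le\big(\sum_{k\le 10n}f(k)\big)^2=e^{o(n)}$. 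This fiber argument produces explicit paths of length $O(n)$ from the fire to almost every vertex of $B_n$, so the detour issue never arises.
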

Note that since any two exponential growth functions are equivalent, by the definition of retainment any finitely generated group $G$ satisfies $\{f(n)\}$-retainment for any function $f$ of exponential growth, thus the above theorem is tight.
\bigskip


  Our last theorem considers wreath products (sometimes known as ``lamplighter'' groups, see Section~ \ref{s:lamplighter} for background and definitions).
  We prove that any non-trivial wreath product  does not have the retainment property for any subexponential function.
\begin{theo}\label{t:lamplighter}
For any infinite group $G$ and any group $H$ with more than one element, the wreath product $H \wr G$ does not satisfy $\{f(n)\}$-retainment for any subexponential function $f$.
\end{theo}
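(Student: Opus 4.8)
The plan is to choose the initial fire $F_0$ to be a large ball $B_R(1)$ in $\Gamma:=H\wr G$, with $R$ large depending on $f$, and to argue that a firefighter with a subexponential budget can never ``catch up with'' or ``encircle'' a fire that already fills an $R$-ball. First recall that $\Gamma$ has exponential growth: fixing $a\in H\setminus\{1\}$ and a geodesic segment $1=g_0,g_1,\dots,g_m$ in $G$ (which exists by König's lemma, since $G$ is infinite and locally finite), the $2^m$ lamp configurations supported on $\{g_1,\dots,g_m\}$ with values in $\{1,a\}$ are distinct elements of the ball of radius $(2+|a|_H)m$ about $1$, so $|B_n(1)|\ge e^{cn}$ for some $c>0$, while $|B_n(1)|\le e^{Cn}$ since $\Gamma$ is finitely generated. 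We may assume $f$ is non-decreasing, so that $F(n):=\sum_{k\le n}f(k)$ is still subexponential, $F(n)=e^{o(n)}$. Suppose for contradiction that a strategy $(P_k)_k$ with $|P_k|\le f(k)$ retains; writing $U$ for the never-burnt set and fixing a basepoint $o$, the equivalence $v_U\simeq v_\Gamma$ together with the exponential growth of $\Gamma$ forces $|U\cap B_n(o)|\ge e^{c_0 n}$ for all large $n$ and a fixed $c_0>0$.

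The core of the argument is that, with the fire starting from all of $B_R(1)$, the firefighter cannot protect any region of exponential growth. If $W$ is a saved region with $|W\cap B_n(1)|\ge e^{c_0 n}/2$ for large $n$, then $W$ is separated from $F_0$ by a protected barrier lying on its boundary, and by the Coulhon--Saloff-Coste inequality (valid in any group of exponential growth, where the inverse growth function is $O(\log)$) the portion of that barrier inside $B_n(1)$ has at least $|W\cap B_n(1)|/(C_1 n)\ge e^{c_0 n/2}$ vertices. On the other hand, since the fire already occupies $B_R(1)$ at time $0$ and spreads at unit speed, each barrier vertex at distance $R+t$ from $1$ must be protected by the time the fire front reaches it — a time $O(t)$, provided detours are controlled (see below) — so this barrier is protected by time $O(n)$ and hence has at most $F(O(n))=e^{o(n)}$ vertices, a contradiction for $n$ large. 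Summing the same estimate over all enclosed saved components (their boundaries are pairwise disjoint and contained in $\bigcup_k P_k$, and isoperimetry bounds each $|W\cap B_n|$ by $C_1 n$ times its boundary) shows that the enclosed part of $U$ inside $B_n(o)$ has size $e^{o(n)}$; the only other vertices of $U$ are protected ones, again all protected by time $O(n)$ and hence at most $F(O(n))=e^{o(n)}$. Thus $|U\cap B_n(o)|=e^{o(n)}$, contradicting $|U\cap B_n(o)|\ge e^{c_0 n}$.

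The step that needs real work — and the only place the specific geometry of $H\wr G$ is used beyond exponential growth — is the detour control: a priori the fire, forced around already-protected vertices, could reach a vertex at distance $\rho$ only at a time $\gg\rho$, which would break all of the timing above. This is handled by a simultaneous induction on the radius. Assuming the saved set inside $B_\rho(1)$ is thin (of subexponential size, which is exactly what the previous paragraph delivers up to radius $\rho$), the Coulhon--Saloff-Coste inequality implies that this thin set can enclose only a region of comparably small total size, so removing it from $\Gamma$ changes distances inside $B_\rho$ by only a controlled amount; in a wreath product one makes this explicit by re-routing a blocked path at bounded relative cost, e.g.\ by temporarily switching on an auxiliary lamp to avoid the forbidden configurations. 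Hence the fire reaches radius $\rho+O(1)$ by time $O(\rho)$, closing the induction and justifying the ``protected by time $O(n)$'' claims. The main obstacle is precisely making this induction between ``$U$ is thin'' and ``the fire is fast / distortion is small'' both precise and non-circular, and pinning down the quantitative detour bound that keeps $F(O(n))=e^{o(n)}$ comfortably below $e^{c_0 n}$.
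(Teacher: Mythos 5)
Your approach is genuinely different from the paper's, and it contains at least two gaps that you have not closed, one of which you flag yourself.

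\textbf{The isoperimetry step is not available as stated.} You invoke Coulhon--Saloff-Coste to conclude that if $W$ is a never-burnt region with $|W\cap B_n|\geq e^{c_0 n}/2$, then the portion of $\partial W$ that lies \emph{inside} $B_n$ and consists of \emph{protected} vertices has size $\gtrsim e^{c_0 n}/n$. But CSC controls $|\partial A|$ for a finite set $A$, with no information about \emph{where} that boundary sits. If you apply it to $W\cap B_n$, the guaranteed boundary could lie almost entirely on the sphere $S_n$ (where $W$ is simply truncated and nothing need be protected); the sphere itself has size comparable to $e^{Cn}$ with $C>c_0$, so this is not a small correction. If instead you apply it to $W$ as an infinite set, you get nothing localized to $B_n$ at all. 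This is exactly the difficulty the paper highlights in Section~3 before stating its Poincar\'e-type Lemma~\ref{lemma:isoperimetry}: in a tree one can split $B_n$ into two exponential halves by a single vertex. On a Cayley graph of exponential growth you cannot deduce ``many protected vertices inside $B_n$'' from a naive isoperimetric inequality.

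\textbf{The detour control is not a technicality; it is the whole difficulty, and the sketch is circular.} You observe correctly that the argument needs ``the fire reaches a vertex at distance $\rho$ by time $O(\rho)$,'' and that this can fail if the protected set distorts distances. Your proposed fix --- a simultaneous induction where thinness of $U\cap B_\rho$ forces small distance distortion, which in turn proves thinness at radius $\rho+O(1)$ --- would need a concrete lemma of the form ``removing a subexponential set from $H\wr G$ changes distances inside $B_\rho$ by at most $O(1)$ multiplicatively,'' and there is no such lemma. A thin set of vertices, placed adversarially, can block every short path between two given points; ``switching on an auxiliary lamp'' is a suggestive picture but not a re-routing argument, because the forbidden set need not be describable in terms of a small number of lamp patterns. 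You acknowledge this as ``the main obstacle,'' which is correct, but the proof cannot end with the obstacle still standing.

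For comparison, the paper avoids both problems by never reasoning about speed of spread or about which vertices are protected where. It uses Corollary~\ref{cor:fire_spread} to get exponentially many burning vertices $A\subset F_n\cap B_n$, exponentially many never-burning vertices $B\subset U\cap B_n$, and then explicitly constructs pairwise disjoint paths $\gamma_i\colon a_i\to b_i$ of length at most $100n$ by manipulating lamp configurations and lamplighter positions (with a case split according to whether the $a_i$'s and $b_i$'s differ in lamps, positions, or a mixture, plus a dilution step to enforce disjointness). Since the total number of protected vertices by time $101n$ is subexponential while the number of disjoint paths is exponential, some path is unprotected and the fire crosses it --- a pure counting contradiction, with no need for isoperimetry inside balls or any control on how fast the fire spreads through the protected landscape. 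Your outline could conceivably be repaired, but it would require proving new geometric lemmas (localized isoperimetry, distance non-distortion under thin deletions) that are at least as hard as the theorem itself; the disjoint-paths construction is the step you are missing.
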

This theorem is tight in the same sense as Theorem~\ref{t:product_groups_3} and its proof can be found in Section~\ref{s:lamplighter}.

 Since exponential retainment holds trivialy for any finitely generated group, we define a new notion we dub  \textbf{strong} retainment that asks for protecting a positive portion of the graph (see Section~\ref{s:notation}). We end Section~\ref{s:lamplighter} showing
 that for the lamplighter on $\Z$, $LL(\Z):=\Z_2 \wr \Z$ with the switch-walk-switch generators, $f(n)=4\sqrt{2}^n$ is enough to get strong retainment.
  A slight modification of the proof of Theorem~\ref{t:lamplighter} shows that for a given Cayley graph of the lamplighter group there is no strong retainment for all functions $f$ with small enough exponential growth. Thus one gets exponential lower and upper bounds for strong retainment on $LL(\Z)$.

\bigskip

\noindent \textbf{Acknowledgments.}
  GA and MG were supported by the Israel Science Foundation Grant 957/20.
  RB has counted on the support of the Mathematical Institute of Leiden University.
  GK was supported by the Israel Science Foundation Grant 607/21 and by the Jesselson Foundation.

\section{Notation and preliminaries}\label{s:notation}
~
\par In this section, we introduce the notation that will be used throughout the text.

  Let $G$ be a finitely generated group, and fix $S$ a symmetric finite set of generators for $G$.
  We consider the Cayley graph of $(G,S)$ and identify its set of vertices with $G$.
  Denote by $e=e_{G}$ the identity element on $G$ and, for a given $R \in \Z \cap [0, \infty)$, let $B_{R}= B_{R}^{G} = B(e,R)$ be the ball centered in $e$ with radius $R$ with respect to the graph distance.
  Furthermore, consider the growth function $v_{G}: \mathbb{N}_{0} \to \mathbb{R}$ defined as
\begin{equation}
  v_{G}(R) = |B_{R}|.
\end{equation}
  When the group $G$ is clear from context, we will denote $v_{G}$ simply by $v$.

  Let $f: \mathbb{N} \to \mathbb{N}$ be a sequence of non-negative integers.
  Assume that, at time zero, a fire breaks on a finite subset $F_{0}$ of $G$.
  At subsequent integer times $n \in \mathbb{N}$, a set $W_{n}$ of size at most $f(n)$ vertices which are not on fire are declared safe.
  After that, spread the fire from $F_{n-1}$ to all neighboring sites that are not safe.
  Denote the set of vertices on fire at time $n$ by $F_{n}$ and by $U$ the set of vertices that are never on fire.
  We call $\{W_{n}\}_{n \in \mathbb{N}}$ an $\{f(n)\}$-allowed strategy if $|W_{n}| \leq f(n)$, for all $n \in \mathbb{N}$.
   We say that $G$ has the $\{f(n)\}$-fire retaining property if, for every choice of $F_{0}$, there exists an $\{f(n)\}$-allowed strategy such that the growth of $U$ with respect to the graph metric in $G$ is equivalent to the growth of $G$.

\nc{c:safe_growth}
\nc{c:safe_growth_2}

  The group $G$ has polynomial growth of order $d$ if
\begin{equation}
  v_{G}(n) \simeq n^{d},
\end{equation}
meaning that $\frac{v_{G}(n)}{n^{d}}$ is bounded from above and below by positive constants. The group $G$ has exponential growth if
\begin{equation}
  v_{G}(n) \simeq \gamma^{n},
\end{equation}
for some $\gamma>1$. Groups might also have intermediate growth, in the sense that it has super-polynomial and subexponential growth functions.

  For polynomial growth groups, $G$ has the $\{f(n)\}$-retaining property if, for any finite initial set $F_{0}$, there exist a constant $\uc{c:safe_growth}=\uc{c:safe_growth}(F_{0})>0$ and an $\{f(n)\}$-allowed strategy $\{W_{n}\}_{n \in \mathbb{N}}$ such that
\begin{equation}\label{eq:safe_growth}
  \left| U \cap B_{n} \right| \geq \uc{c:safe_growth}v_{G}(n),
\end{equation}
for all $n$ large enough.

  When $G$ is a group with intermediate or exponential growth, the fire retainment property is not equivalent to having a positive fraction of vertices not on fire as in~\eqref{eq:safe_growth}.
  Indeed, in the exponential-growth case for example, any two exponential functions have equivalent growth, which in particular implies that fire retainment can be achieved by protecting an amount of vertices that grows as a vanishing proportion of the graph and still has equivalent growth function.
  Motivated by this, we introduce the concept of strong retainment.
  In analogy to the polynomial-growth case, we say that a graph $G$ has the strong $\{f(n)\}$-retaining property if, for any finite initial set $F_{0}$, there exist a constant $\uc{c:safe_growth_2}=\uc{c:safe_growth_2}(F_{0})>0$ and an $\{f(n)\}$-allowed strategy $\{W_{n}\}_{n \in \mathbb{N}}$ such that
\begin{equation}\label{eq:safe_growth_2}
  \left| U \cap B_{n} \right| \geq \uc{c:safe_growth_2}v_{G}(n),
\end{equation}
for all $n$ large enough.
  Notice that strong retainment is equivalent to (weak) retainment if $G$ has polynomial growth (see~\eqref{eq:fire_growth}).

Mart\'{i}nez-Pedroza and Prytu{\l}a~\cite{mpp} proved that for polynomial growth groups, fire retaining is a quasi-isometry invariant in the sense that if, a Cayley graph $G$ satisfies $\{f(n)\}$-retainment and $H$ is quasi-isometric to $G$, then $H$ satisfies $\{h(n)\}$-retainment for some $h\simeq f$. The same remains true also for non-polynomial growth if one demands that $f$ is increasing. It is also quite straightforward that this is also the case if $f$ fixates on $0$ after finitely many steps (what is called in~\cite{mpp} ``constant step retainment'').

  For a set $A \subset G$, denote the vertex boundary of $A$ by
\begin{equation}\label{eq:vertex_boundary}
  \partial A = \{x \in G: \text{there exists } y \in A \text{ with } x \sim y\}.
\end{equation}

\section{Proof of Theorem~\ref{t:fire_retaining}}
~
\par This section contains the proof of Theorem~\ref{t:fire_retaining}. We retain the notation $F_n$ and $W_n$ from the previous section.
The key observation for the proof is that $\partial \left( \cup F_{n} \right) \subset \cup W_{n}$.
More precisely, we will use the relation
\begin{equation}\label{eq:boundary}
\partial F_{n} \subset \left(F_{n+1} \setminus F_{n} \right) \bigcup \cup_{k=0}^{n+1} W_{k}.
\end{equation}

We will also need an ``initialization step'' -- that if we do not have containment then for a large enough initial fire and time many vertices are on fire. Precisely,
\begin{lemma}Let $G$ be the Cayley graph of a group with polynomial growth of order $d$. Then for any $f(n)=o(n^{d-2})$ and any $R_0$ sufficiently large there exists a constant $\uc{c:containment}=\uc{c:containment}(f, R_{0})$ such that if the initial set $F_{0}$ is taken to be $B_{R_0}$ then for all $\{f(n)\}$-allowed strategies $\{W_{n}\}_{n \in \mathbb{N}}$,
\begin{equation}\label{eq:fire_growth}
  \left| F_{n} \right| \geq \uc{c:containment} n^{d}, \text{ for all } n \in \mathbb{N}.
\end{equation}
\end{lemma}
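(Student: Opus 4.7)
The plan is to combine the containment relation \eqref{eq:boundary}, which directly yields
\[
|F_{n+1}| - |F_n| \;\geq\; |\partial F_n| - \sum_{k=0}^{n+1} f(k),
\]
with the standard isoperimetric inequality for Cayley graphs of polynomial growth. By Coulhon--Saloff-Coste, there is a constant $c_{\textnormal{iso}} > 0$ depending only on $G$ such that $|\partial A| \geq c_{\textnormal{iso}} |A|^{(d-1)/d}$ for every finite $A \subset G$. Writing $S(n) := \sum_{k=0}^{n} f(k)$ and noting that $f(n) = o(n^{d-2})$ implies $S(n) = o(n^{d-1})$, one obtains the recursion
\[
|F_{n+1}| \;\geq\; |F_n| \;+\; c_{\textnormal{iso}} |F_n|^{(d-1)/d} \;-\; o(n^{d-1}).
\]

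From this recursion I would prove $|F_n| \geq c\, n^d$ for all $n$ by induction, with $c=c(f,R_0)>0$ chosen to satisfy two constraints. For the inductive step, assuming $|F_n| \geq c n^d$ gives $|F_n|^{(d-1)/d} \geq c^{(d-1)/d} n^{d-1}$, while the binomial expansion yields $(n+1)^d - n^d \leq d\, n^{d-1}(1+o(1))$. Thus
\[
|F_{n+1}| \;\geq\; c n^d \;+\; \bigl(c_{\textnormal{iso}} c^{(d-1)/d} - o(1) - dc(1+o(1))\bigr) n^{d-1}
\]
exceeds $c(n+1)^d$ as soon as the strict inequality $c_{\textnormal{iso}} c^{(d-1)/d} > dc$ holds with enough buffer, i.e.\ $c \leq (c_{\textnormal{iso}}/(2d))^d$, and $n \geq N(f)$ for a threshold depending only on the decay rate of $S(n)/n^{d-1}$. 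For the base case I use the monotonicity $|F_n| \geq |F_0| = v_G(R_0)$ together with the polynomial growth lower bound $v_G(R_0) \geq c_{\textnormal{vol}} R_0^d$, so that $|F_n| \geq c n^d$ for all $n \leq R_0$ provided $c \leq c_{\textnormal{vol}}$.

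The hypothesis that $R_0$ be sufficiently large is then used precisely to ensure $R_0 \geq N(f)$, so that the base case covers the initial range $n \leq R_0$ and the induction propagates the bound to all larger $n$. The main (and essentially only) technical point is absorbing the lower-order terms coming from the expansion of $(n+1)^d - n^d$ and from $S(n)/n^{d-1}$ into the margin provided by the strict inequality $c_{\textnormal{iso}} c^{(d-1)/d} > dc$; beyond the quantitative isoperimetric inequality for polynomial-growth Cayley graphs, no further machinery is required.
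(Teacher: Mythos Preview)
Your argument is correct and is essentially the approach the paper defers to when it cites \cite{abk}: combine the containment relation \eqref{eq:boundary} with the Coulhon--Saloff-Coste isoperimetric inequality $|\partial A|\ge c_{\textnormal{iso}}|A|^{(d-1)/d}$ to obtain the recursion $|F_{n+1}|\ge |F_n|+c_{\textnormal{iso}}|F_n|^{(d-1)/d}-S(n+1)$, and then propagate the bound $|F_n|\ge c\,n^d$ by induction. The choice $c\le\min\{(c_{\textnormal{iso}}/(2d))^d,\,c_{\textnormal{vol}}\}$ and the requirement $R_0\ge N(f)$ are exactly the right calibration: the first guarantees the margin $c_{\textnormal{iso}}c^{(d-1)/d}-dc\ge dc>0$ absorbs both the $o(1)$ from $S(n)/n^{d-1}$ and the lower-order binomial terms once $n\ge N(f)$, while the second lets the trivial bound $|F_n|\ge v_G(R_0)\ge c_{\textnormal{vol}}R_0^d$ cover the range $n\le R_0$.
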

\begin{proof}
  This is proved exactly as in~\cite{abk} (see the proof of Theorem 1, and, in particular, Equation~(4) there). We skip the details.
\end{proof}

The core of the proof of Theorem~\ref{t:fire_retaining} is the following argument, which we will make precise below: assume that we have retainment but not containment, then for a large enough initial fire and time, both the set of vertices on fire $F_{n}$ and the set of vertices that are not on fire must hold a positive portion of the ball $B_n$. We then use an isoperimetric inequality to deduce that the boundary between these two sets is large \textit{inside the ball}. All the vertices in this boundary must either catch fire next turn or be protected (at that step or before). The upper bound on the number of vertices being protected now translates to a lower bound on the number of new vertices catching fire. This lower bound implies that in $O(n)$ steps we exhaust all vertices inside the ball, reaching a contradiction.\\


Note that the isoperimetry we need is a lower bound on the number of boundary vertices between $S\cap B_n$ and $B_n \setminus S$ for a given set $S$.  This is quite different than the usual isoperimetric inequality as sometimes most of the boundary vertices of $S$ may lie outside the ball. For instance, if the Cayley graph is a tree (as in the case of the free group) one may divide $B_n$ into two equal size sets with only one boundary vertex between them (compare with the fact that each of these sets has a large boundary of its own). The isoperimetric inequality we use is given in Lemma~\ref{lemma:isoperimetry}.
\nc{c:containment}
We are now ready to give the proof of the theorem.

\begin{proof}[Proof of Theorem~\ref{t:fire_retaining}]
  Assume the opposite, i.e., that there exists $f(n)=o(n^{d-2})$ such that, for all initial fires $F_{0}$, there exists an $\{f(n)\}$-allowed strategy for which $U$ has polynomial growth of order $d$.
  Choose $F_{0}$ such that~\eqref{eq:fire_growth} is true, and consider $\uc{c:growth}$ and $n_{0} \in \mathbb{N}$ such that~\eqref{eq:safe_growth} holds for all $n \geq n_{0}$.
  Furthermore, by choosing $n_{0}$ large enough, we can suppose that $F_{n} \subset B_{2n}$, for all $n \geq n_{0}$.

  Combining now Lemma~\ref{lemma:isoperimetry} with Equations~\eqref{eq:safe_growth} and~\eqref{eq:fire_growth} yields, for all $k \geq n$,
\begin{equation}
  \begin{split}
    \left| B_{3n} \cap \partial F_{k}\right| & \geq \frac{\uc{c:growth}}{n\cdot v(2n)} \left|F_{k} \cap B_{n}\right| \cdot \left|B_{n} \setminus F_{k}\right| \\
    & \geq \frac{\uc{c:growth}\uc{c:safe_growth}\uc{c:containment}}{n \cdot v(2n)} n^{2d} \\
    & \geq C n^{d-1},
\end{split}
\end{equation}
for some positive constant $C$.

  Fix $\tilde{C}$ such that, for all $n \in \mathbb{N}$,
\begin{equation}
  (\tilde{C}-1)\frac{C}{2}n^{d} \geq v(3n).
\end{equation}
  Notice that if we have, for all $k \in [n, \tilde{C} n]$,
\begin{equation}
  \left|\left(F_{k+1}\setminus F_{k}\right) \cap B_{3n} \right| > \frac{C}{2}n^{d-1},
\end{equation}
then
\begin{equation}
  \begin{split}
    \left| F_{\tilde{C} n} \cap B_{3n} \right| & \geq \sum_{k=n}^{\tilde{C} n} \left|\left(F_{k+1}\setminus F_{k}\right) \cap B_{3n} \right| \\
    & \geq \sum_{k=n}^{\tilde{C} n} \frac{C}{2}n^{d-1} \\
    & \geq \left(\tilde{C} n-n\right) \frac{C}{2}n^{d-1} \\
    & > v(3n),
  \end{split}
\end{equation}
a contradiction.

  This implies that there exists $k \in [n, \tilde{C} n]$ such that
\begin{equation}
  \left| \partial F_{k} \cap B_{3n} \right| \geq C n^{d-1} \text{ and } \left|\left(F_{k+1}\setminus F_{k}\right) \cap B_{3n} \right| \leq \frac{C}{2}n^{d-1}.
\end{equation}
  Together with Equation~\eqref{eq:boundary}, we obtain
\begin{equation}
  \left| \bigcup_{j=1}^{k} W_{j} \right| \geq \frac{C}{2}n^{d-1}.
\end{equation}
  However, we also have the bound
\begin{equation}
  \left| \bigcup_{j=1}^{k} W_{j} \right| \leq \sum_{j=1}^{k} f(j) \leq \sum_{j=1}^{\tilde{C}n}f(j) = o(n^{d-1}).
\end{equation}
  These two equations yield a contradiction and imply that $G$ does not have the $\{f(n)\}$-retaining property.
\end{proof}

\section{Proofs of Theorems~\ref{t:product_groups} and~\ref{t:product_groups_3}}
~
\par We now proceed with the proof of Theorems~\ref{t:product_groups} and~\ref{t:product_groups_3}.
  They follow basically the same strategy and for that reason we choose to present them together. We retain the notation $F_n$ and $W_n$ from Section \ref{s:notation}.

The base structure of both proofs as follows: first, we prove an initialization step (Lemma \ref{lemma:fire_spread} and Corollary \ref{cor:fire_spread}) showing that if we are well below the containment threshold then many vertices will be on fire. Then, we examine how the fire spreads on well-chosen fibers of the Cartesian product $G \times H$, alternating between long stretches where we follow only the spread on fibers of the form $G\times {h}$ and ${g}\times H$. Assumptions~\eqref{eq:slow_growth_f} and~\eqref{eq:slow_growth_f_2} allow us to apply the pigeonhole principle in order to prove the existence of such fibers. The final important observation is that, if there exists a vertex $x \in G$ such that the fiber $\{x\} \times H$ eventually contains both a vertex on fire and a vertex not on fire, then it necessarily contains at least one protected vertex.

\begin{lemma}\label{lemma:fire_spread}
  Assume $G$ is a group that does not satisfy the $\{g(n)\}$-containment property and let $f \leq g$.
  There exists a finite set $F_{0}$ of vertices initially on fire such that, for any $\{f(n)\}$-allowed strategy, we have
\begin{equation}
  |F_{n}| \geq \sum_{k=1}^{n}g(k)-f(k), \text{ for all } n \in \mathbb{N}.
\end{equation}
\end{lemma}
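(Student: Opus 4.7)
The plan is to prove the statement by a short induction-and-contradiction argument. The first step is to choose $F_{0}$: since $G$ does not satisfy the $\{g(n)\}$-containment property, there exists a finite $F_{0} \subset G$ such that every $\{g(n)\}$-allowed strategy starting from $F_{0}$ produces an infinite burnt set; I would fix this $F_{0}$ once and for all. Now I would fix an arbitrary $\{f(n)\}$-allowed strategy $\{W_{k}\}_{k \geq 1}$ with corresponding fires $\{F_{k}\}$, and prove $|F_{n}| \geq \sigma_{n} := \sum_{k=1}^{n}(g(k)-f(k))$ by induction on $n$. The base case $n=0$ is trivial since $\sigma_{0}=0$.

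The heart of the argument is the geometric claim
\begin{equation*}
    |\partial F_{n-1} \setminus F_{n-1}| \geq g(n),
\end{equation*}
which I would prove by contradiction. Suppose $|\partial F_{n-1} \setminus F_{n-1}| \leq g(n)$, and consider the auxiliary strategy $\tilde W$ that follows $W$ on steps $1,\ldots,n-1$ (valid because $f(k) \leq g(k)$), protects the entire external boundary $\partial F_{n-1} \setminus F_{n-1}$ at step $n$ (valid because its cardinality is $\leq g(n)$, and these vertices lie outside the current fire $F_{n-1}$), and does nothing at later steps. Up to time $n-1$ the fires under $\tilde W$ coincide with the $F_{k}$'s; at step $n$, every vertex adjacent to $F_{n-1}$ is either already in $F_{n-1}$ or in the freshly protected set, so the fire cannot grow. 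Once it stops, it stays stopped forever (safe vertices remain safe), so the set of burnt vertices under $\tilde W$ equals $F_{n-1}$ and is finite, contradicting the defining property of $F_{0}$. The claim follows.

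Given the boundary lower bound, the induction step is immediate from the disjoint decomposition $F_{n} = F_{n-1} \sqcup \big( (\partial F_{n-1} \setminus F_{n-1}) \setminus W_{n} \big)$ together with $|W_{n}| \leq f(n)$:
\begin{equation*}
    |F_{n}| \geq |F_{n-1}| + |\partial F_{n-1} \setminus F_{n-1}| - f(n) \geq \sigma_{n-1} + g(n) - f(n) = \sigma_{n}.
\end{equation*}
I do not anticipate a substantive obstacle here; the argument is essentially a one-step ``plug the leak'' modification of the $\{f(n)\}$-strategy. The only two things requiring care are the validity of the auxiliary strategy at the early steps (which is exactly where the pointwise hypothesis $f \leq g$ is used) and that the protections inserted at step $n$ target vertices outside the current fire, which is automatic since $\partial F_{n-1} \setminus F_{n-1}$ is disjoint from $F_{n-1}$ by construction.
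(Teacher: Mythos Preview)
Your approach is essentially the same as the paper's: both establish the increment bound $|F_n\setminus F_{n-1}|\ge g(n)-f(n)$ via a ``plug-the-leak'' auxiliary $\{g(n)\}$-strategy and then sum. There is, however, a slip in your induction step. The decomposition
\[
F_n \;=\; F_{n-1}\,\sqcup\,\big((\partial F_{n-1}\setminus F_{n-1})\setminus W_n\big)
\]
is not correct: vertices of $\partial F_{n-1}\setminus F_{n-1}$ that were protected at some earlier step $k<n$ (i.e.\ lying in $W_1\cup\cdots\cup W_{n-1}$) also do not catch fire at step $n$, and such vertices certainly occur. The correct identity is $F_n\setminus F_{n-1}=(\partial F_{n-1}\setminus F_{n-1})\setminus\bigcup_{k=1}^{n}W_k$, and from your claim on $|\partial F_{n-1}\setminus F_{n-1}|$ alone you cannot deduce $|F_n\setminus F_{n-1}|\ge g(n)-f(n)$, since the cumulative protection may be much larger than $f(n)$.

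The fix is immediate and already implicit in your auxiliary-strategy argument: since $W_1,\dots,W_{n-1}$ are already in place under $\tilde W$, at step $n$ you only need to cover the \emph{currently unprotected} external boundary $(\partial F_{n-1}\setminus F_{n-1})\setminus\bigcup_{k=1}^{n-1}W_k$. Your contradiction therefore actually yields the stronger bound
\[
\Big|(\partial F_{n-1}\setminus F_{n-1})\setminus\bigcup_{k=1}^{n-1}W_k\Big| \;>\; g(n),
\]
from which the correct decomposition gives $|F_n\setminus F_{n-1}|> g(n)-|W_n|\ge g(n)-f(n)$, and the induction goes through. The paper's proof simply states this increment inequality directly rather than isolating a boundary claim.
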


\begin{proof}
  Since $G$ does not satisfy the $\{g(n)\}$-containment property, there exists a finite set $F_{0}$ for which we necessarily have
\begin{equation}
  |F_{n+1} \setminus F_{n}| \geq g(n)-f(n), \text{ for all } n \in \mathbb{N}.
\end{equation}
  Indeed, if this were not the case and, for all $F_{0}$, there exists some $n$ such that the equation above does not hold, we can protect $g(n)$ vertices at step $n$ and contain the fire.

  The proof is now complete by noticing that
\begin{equation}
  |F_{n}| \geq \sum_{k=1}^{n} |F_{k} \setminus F_{k-1}| \geq \sum_{k=1}^{n} g(k)-f(k).
\end{equation}
\end{proof}

\nc{c:fire_spread}

\begin{cor}\label{cor:fire_spread}
  Assume that $G$ has exponential growth. Then, there exists a constant $\uc{c:fire_spread}>0$ such that, for every function $f(n) \leq e^{\uc{c:fire_spread} n}$, there exists a finite set $F_{0}$ of vertices initially on fire such that, for any $\{f(n)\}$-allowed strategy, we have
\begin{equation}
  |F_{n}| \geq e^{\uc{c:fire_spread} n}, \text{ for all } n \in \mathbb{N}.
\end{equation}
\end{cor}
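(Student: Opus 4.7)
The plan is a straightforward reduction to Lemma~\ref{lemma:fire_spread}. The only nontrivial input I need is the following \emph{exponential containment threshold} for exponential-growth Cayley graphs: there exists a constant $c_{0}>0$ such that $G$ does not satisfy the $\{e^{c_{0}n}\}$-containment property. For free groups this is \cite{ln}, and the same type of reasoning is expected to extend to general exponential-growth groups.

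Granting such a $c_{0}$, I set $\uc{c:fire_spread}:=c_{0}/2$, and for any $f$ with $f(n)\leq e^{\uc{c:fire_spread}\,n}$ I define $g(n):=2\,e^{\uc{c:fire_spread}\,n}$. Since $g(n)\leq e^{c_{0}n}$ for all $n$ sufficiently large (finitely many initial values being irrelevant), and since reducing the available protection only makes containment strictly harder, $G$ also fails the $\{g(n)\}$-containment property. Applying Lemma~\ref{lemma:fire_spread} to the pair $(f,g)$ then produces a finite initial set $F_{0}$ such that, for every $\{f(n)\}$-allowed strategy,
\[
|F_{n}|\,\geq\,\sum_{k=1}^{n}\bigl(g(k)-f(k)\bigr)\,\geq\,\sum_{k=1}^{n} e^{\uc{c:fire_spread}\,k}\,\geq\,e^{\uc{c:fire_spread}\,n},
\]
which is precisely the statement of the corollary. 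Observe that the $F_{0}$ produced this way depends only on $g$, hence on $\uc{c:fire_spread}$, not on $f$, so one can even take a single $F_{0}$ uniformly over all admissible $f$.

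The main obstacle is justifying the exponential containment threshold for a general exponential-growth $G$. For \emph{non-amenable} $G$ with Cheeger constant $h>0$, starting from $F_{0}=\{e\}$ one has at each step the inductive bound $|F_{n}|\geq (1+h)|F_{n-1}|-f(n)$; iterating shows $|F_{n}|\geq c\,(1+h)^{n}$ whenever $f$ has exponential rate strictly below $\log(1+h)$, so no such strategy contains the fire. For \emph{amenable} exponential-growth groups the Cheeger estimate is unavailable, and the delicate part of the verification is here; the natural approach is to take a sufficiently large initial ball $F_{0}=B_{R}$ and compare the cumulative protection $\sum_{k}f(k)$ against the volume growth $v(R+n)$ of the ball the fire would otherwise exhaust.
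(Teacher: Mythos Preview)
Your reduction to Lemma~\ref{lemma:fire_spread} via an auxiliary $g(n)=2e^{\uc{c:fire_spread} n}$ is exactly the paper's argument, only with the arithmetic spelled out. The one place you hedge---whether the exponential containment threshold holds for \emph{general} exponential-growth Cayley graphs---is precisely what the paper takes from Lehner~\cite{ln} (not merely for free groups), so your separate sketches for the non-amenable and amenable cases are unnecessary.
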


\begin{proof}
  This is a consequence of Lemma~\ref{lemma:fire_spread} together with the fact that the graph $G$ does not satisfy exponential containment for any rate smaller than the growth rate of $G$ (see Lehner~\cite{ln}).
\end{proof}

  We are now ready to prove Theorem~\ref{t:product_groups}.
\begin{proof}[Proof of Theorem~\ref{t:product_groups}]
  Choose $F_{0}$ as in Corollary~\ref{cor:fire_spread} and declare all vertices in $F_{0} \times \{e_{H}\}$ as initially on fire in $G \times H$.
  Set $k = \max \{|x|: x \in F_{0} \}$, and choose $n \geq k$.
  Consider an $\{f(n)\}$-allowed strategy for the fire-retaining problem on $G \times H$, and denote by $F_{n}$ the number of vertices on fire at step $n$.
  Observe that, due to Corollary~\ref{cor:fire_spread}, we have
\begin{equation}
  \Big|F_{n} \cap \big( G \times \{e_{H}\} \big) \Big| \geq e^{\uc{c:fire_spread} n}, \text{ for all } n \in \mathbb{N}.
\end{equation}

  Denote now by $P_{n} = \bigcup_{k=1}^{10n}W_{n}$ the collection of protected vertices up to time $10n$, and notice that
\begin{equation}
  |P_{n}| \leq \sum_{i=1}^{10n} f(i) = o(v_{H}(n)).
\end{equation}
  In particular, there exists at least one $x\in G$ such that $(x,e) \in F_{n} $ and such that the set $\{x\} \times B^{H}_{n}$ has no protected vertices up to time $3n$.
  This then implies that, by running the process up to time $3n$, all vertices in $\{x\} \times B^{H}_{n}$ are on fire.

  Once again using the fact that $|P_{n}| = o(v_{H}(n))$, there exists $y \in B^{H}_{n}$ such that
\begin{equation}
  P_{n} \cap \big( G \times \{y\} \big) = \emptyset.
\end{equation}
  This then implies that, by time $5n$, all vertices on $B^{G}_{2n}(x) \times \{y\}$ are on fire, which yields that all vertices on $B^{G}_{n} \times \{y\}$ are also on fire.

  We now run the process up to time $9n$.
  Since the whole set $B_{n}^{G} \times \{y\}$ is on fire at time $5n$, if $(a,b) \in B^{G}_{n} \times B^{H}_{n}$ is not on fire at time $9n$, then
\begin{equation}
  P_{n} \cap \big( \{a\} \times B^{H}_{n} \big) \neq \emptyset.
\end{equation}
  This then implies that, if $U$ denotes the set of vertices that are never on fire,
\begin{equation}
\Big| U \cap \big( B^{G}_{n} \times B^{H}_{n} \big) \Big| \leq v_{H}(n) |P_{n}| = o(v_{H}(n)^{2}).
\end{equation}
Since this is subexponential, it is asymptotically slower than the growth of $G\times H$ and the proof is and concluded.
\end{proof}

The proof of Theorem~\ref{t:product_groups_3} follows essentially the same steps.
\begin{proof}[Proof of Theorem~\ref{t:product_groups_3}]
  Due to~\eqref{eq:slow_growth_f_2}, we can choose $F_{0} \subset G$ and $\tilde{F}_{0} \subset H$ as in Corollary~\ref{cor:fire_spread}. Declare all vertices in $\big( F_{0} \times \{e_{H}\} \big) \cup \big(\{e_{G}\} \times \tilde{F}_{0} \big) $ as initially on fire in $G \times H$.

  Denote by $P_{n}$ the collection of protected vertices up to time $10n$, and write $P_{G}$ and $P_{H}$ to its projection on $G$ and $H$, respectively.

  Proceeding as in the proof of Theorem~\ref{t:product_groups} (using the fact that $f$ has subexponential growth), if $(x,y) \in B^{G}_{n} \times B^{H}_{n}$ is not on fire by time $10n$, then $x \in P_{G}$ and $y \in P_{H}$.
  In particular, if $U$ denotes the set of vertices never on fire, then
\begin{equation}
  \Big| U \cap \big( B^{G}_{n} \times B^{H}_{n} \big) \Big| \leq |P_{G} \times P_{H}| \leq \Big( \sum_{i=1}^{10n} f(i) \Big)^{2} = o(e^{Kn}),
\end{equation}
for every $K >0$, which concludes the proof.
\end{proof}

\section{Fire retaining on lamplighter groups}\label{s:lamplighter}

Given two discrete groups $G,H$, the (restricted) wreath product $H \wr G$ is the semidirect product $G \ltimes \bigoplus_G H$. Each element in the wreath product is a pair $(g,L)$ with $g\in G$ and $L\in \bigoplus_G H$. Recall  that elements in $\bigoplus_G H$ are functions from $G$ to $H$ that differ from the identity in only finitely many places. The product rule is given by $(g_1,L_1)(g_2,L_2)=(g_1g_2,L_1 L_2^{g_1})$, where $L_1 L_2^{g_1} ( s) = L_1(s)L_2(g_1^{-1}s)$.  Wreath products play an important role in geometric group theory and often serve as a rich source of examples.

The name \textit{lamplighter groups} comes from thinking of the coordinate $g\in G$ as the position of a lamplighter that walks on $G$ and may change the lamps by changing the lamp at his position. We use this terminology and call $L$ the  lamp configuration. Fix two generating sets  $S_G, S_H$  of $G,H$ respectively. We refer to elements of the form $(s_g,e_H)$ with $s_g\in S_G$ as ``walk'' and elements of the form $(e_G,\delta_h)$, for $h \in S_H$, with $\delta_h(e_G)=h$ and $\delta_h(g)=e_H$ for $g \neq e_G$  as switches. Unless otherwise stated we will take as a generating set the switch-walk-switch elements, that is elements of the form $S_1WS_2$, where $S$ switches lamps and $W$ is a walk generator. By quasi-isometry invariance this choice does not matter for the sake of the main theorem.

\begin{proof}[Proof of Thereom \ref{t:lamplighter}]
  Denote $\Gamma=H \wr G$, and let $f$ be some subexponential function. By Corollary \ref{cor:fire_spread}
  given a large enough initial fire $F_0$, for all large enough $n$ there are exponentially many vertices on fire inside $B_n$.
Assume by contradiction that $\Gamma$ does satisfy $\{f(n)\}$-retainment. Then for $F_0$ there is some $\{f(n)\}$- allowed strategy for which for  all large enough $n$ the set of vertices never on fire inside $B_n$ is also exponentially large.
From now on we fix this $\{f(n)\}$-allowed strategy.
All together we conclude that there exists some $c>0$ such that, for all large enough $n$, $|U\cap B_n(\Gamma)|,|F_n \cap B_n(\Gamma)| \geq e^{cn}$.
To reach a contradiction, we will find exponentially large subsets $A=\{a_i\}\subset (F_n \cap B_n)$ and $B=\{b_i\}\subset (U\cap B_n)$ with some good properties, and a set of paths $\gamma_i:a_i \rightarrow b_i$ so that these paths are pairwise disjoint (i.e. $\gamma_i$ and $\gamma_j$ do not intersect for $i\neq j$) and the length of each path $\gamma_i$ is at most $100n$.

Before constructing the sets $A,B$ and the paths $\gamma_i$, let us show how we use these paths to reach a contradiction from which the theorem follows. Let $P$ denote the set of all protected vertices till time $101n$. Since $f$ is subexponential, for any large enough $n$, $|P|<|A|$. Therefore there is at least one $i$ for which no vertex in $\gamma_i$ is protected until time $101n$. Since $a_i$ is on fire at time $n$, the fire will spread unhindered through $\gamma_i$ until it reaches $b_i$ before time $n+100n$. Thus $b_i$ will eventually catch fire contradicting the fact that $b_i\in U$.

It remains now to construct the sets $A,B$ and the paths $\gamma_i$.
Since $U\cap B_n$ and $F_n\cap B_n$ are exponential in size, by a simple pigeonhole principle we can always find subsets $A=\{a_i\}_{i\in I}\subset F_n\cap B_n$ and $B =\{b_i\}_{i\in I} \subset U \cap B_n$ of exponential size (i.e. $|A|,|B|\geq e^{c_1n}$ for some suitable $c_0<c_1<c$ and any large enough $n$) such that one of the following four cases holds:
\begin{enumerate}
\item \label{case:different lamps} All elements in $A\cup B$ have different lamp configurations.
\item \label{case:different lamplighters} All elements in $A\cup B$ have different lamplighter positions.
\item \label{case:mixed} All elements in $A$ have different lamp configurations, all elements in $B$ have the same lamp configuration and  different lamplighter positions, and the lamp configuration of the elements of $B$ does not appear in the elements of $A$.
\item \label{case:symmetric} All elements in $B$ have different lamp configurations, all elements in $A$ have the same lamp configuration and  different lamplighter positions, and the lamp configuration of the elements of $A$ does not appear in the elements of $B$.
\end{enumerate}
(Note that if $G$ is of subexponential growth, we could always find subsets satisfying the first clause, but in the general case this is not true \emph{a priori}).
Before going over the different cases, let us set some notation. The elements of $A$  will be denoted $a_i=(x_i,k_i)$ where $x_i$ is the lamp configuration (that is supported on $B_n$) and $k_i$ is the lamplighter position in $G$. Similarly, elements of $B$ are $b_i=(y_i,l_i)$ with lamps $y_i$ and lamplighter positions $l_i$.
For an element $g\in G$ we denote by $\overrightarrow{g}$ some (arbitrarily chosen) fixed geodesic path from $e_G$ to $g$ in $G$, and for an element $k\in G$ we denote by $k\overrightarrow{g}$ the translation of the path $\overrightarrow{g}$ by $k$ (i.e. making the same steps only starting from $k$ instead of $e_G$). Finally, we fix two elements $g,g^*\in G$ with $\dist(g,e_G)=5n$ and $\dist(g^*,e_G)=10n$, and choose some non-identity generator $h$ of $H$.
We are now ready to go over the cases one by one, constructing the paths $\gamma_i$ and diluting the sets if needed.
\begin{enumerate}
\item
Case \ref{case:different lamps}: If the ``all off'' configuration is in one of $A$ or $B$, remove it. We construct the paths $\gamma_i:a_i\rightarrow b_i$ as follows
\begin{enumerate}
\item Move the lamplighter to $g$ by following some geodesic.
\item Make a copy of the lamp configuration $x_i$ around $g$, so that the lamps in $B_n(g)$ are just a translation by $g$ of the lamps in $B_n$.
(this can be achieved simply by multiplying by $a_i$).
\item Move the lamplighter to $k_i g^*$ along some geodesic.
\item Make another copy of the lamp configuration $x_i$ around $g^*$.
\item Move the lamplighter to $e_G$ along some geodesic.
\item Change the lamps in $B_n$ from $x_i$ to $y_i$ (in the shortest way).
\item Move the lamplighter to $g$.
\item Turn off all the lamps in $B_n(g)$.
\item Move the lamplighter to $g^*$ (along some geodesic in $G$).
\item Turn off all the lamps in $B_n(g^*)$.
\item Move the lamplighter to $l_i$ reaching $b_i$.
\end{enumerate}
We claim that the paths $\gamma_i$ are disjoint for $i\neq j$. To see this, note that at any step of the path, either the configuration inside $B_n(e_G)$ is $x_i$ (steps $1$-$5$) or the lamp configuration in $B_n(g)$ is a copy of $x_i$ (steps $3-7$) or the lamp configuration in $B_n(g^*)$ is a copy on $x_i$ (steps $5-9$) or the lamp configuration in $B_n(e_G)$ is $y_i$ (steps $7-11$). Since the $x_i's$ and $y_i's$ are all distinct for different $i$'s, a vertex in $\gamma_i$ cannot be in $\gamma_j$.
To bound the length of $\gamma_i$ note that each ``move'' step takes at most $11n$ steps, and each ``copy/change the lamps'' step takes at most $2n$ steps.
\\
\item Case \ref{case:different lamplighters}: We construct the paths $\gamma_i:a_i\rightarrow b_i$ as follows
\begin{enumerate}
\item Move the lamplighter to $k_i g$ along the path $k_i\overrightarrow{g}$.
\item Light the lamp at $k_i g$ to value $h$.
\item Move the lamplighter to $l_ig^*$ along some geodesic.
\item Light the lamp at $l_ig^*$ to value $h$.
\item Move the lamplighter to $k_i g$ along some geodesic.
\item Turn off the lamp at $k_i g$.
\item Use some geodesic in $\Gamma$ to turn the lights in $B_n(e_G)$ to $y_i$.
\item Move the lamplighter to $l_i g^*$ along some geodesic.
\item Turn off the lamp at $l_i g^*$.
\item Move the lamplighter backwards along the path $l_i \overrightarrow{g^*}$ to $l_i$, reaching $b_i$.
\end{enumerate}
Once again the length of the paths created is bounded by $100n$ as the geodesics in $G$ we move along are of length at most $11n$, and changing the lamps in $B_n(e_G)$ to $y_i$ takes at most $2n$ steps once the lamplighter is in $e_G$.\\
In this construction we do not claim that all the paths we constructed are disjoint, but that each path $\gamma_i$ only intersects $\leq 1000n^2$ other paths. Thus filtering out any path $\gamma_i$ that intersects some $\gamma_j$ for $j<i$ will leave us with an exponential family of disjoint paths (and diluted yet exponential families $A'\subset A$ and $B'\subset B$ with the corresponding indexes), completing the case.
To see the claim, first note that since $k_i,k_j, k_i g,k_j g,  l_i,l_j,  l_i g^*, l_j g^*$ are all different for $i\neq j$, the paths cannot intersect while the lamp at $k_i g$ or $l_i g^*$ is on.
Thus we need only worry about vertices of step $1$ or step $10$, overlapping with vertices from the same steps for different $j$'s. For this, enumerate the vertices on the paths $\overrightarrow{g},\overrightarrow{g^*}$. Let $g_m,g^*_m$ denote the $m$th vertex in $\overrightarrow{g},\overrightarrow{g^*}$. Note that since these are geodesics, $g_r\neq g_q$ and $g^*_r\neq g^*_q$ for any $r\neq q$. Using inverses we conlcude that for any $i,j$ there is at most one pair of indeces $r,q$ so that $k_i g_r = k_j g_q$, and therefore that the vertices of step $1$ of $\gamma_i$ can intersect at most $25n^2$ different paths $\gamma_j$. The same  argument works for step $10$ giving a bound of $100n^2$. This concludes the construction for this case.

\item Cases \ref{case:mixed} and \ref{case:symmetric}. Without loss of generality, we may assume all the $a_i$'s have different lamp configurations, different from $y$ which is the common lamp configuration for all $b_i\in B$, which in turn each have different lamplighter positions. If roles are reversed we simply reverse the roles in the construction.
    We construct the paths $\gamma_i$ from $b_i$ to $a_i$ as follows:
    \begin{enumerate}
      \item Move the lamplighter along $\overrightarrow{g}$ to $l_i g$.
      \item Light the lamp at $l_i g$ to value $h$.
      \item Use some geodesic to change the lamps in $B_n$ to $x_i$.
      \item Use some geodesic to get the lamplighter to $l_i g$.
      \item Turn off the lamp at $l_i g$.
      \item Use some geodesic to get the lamplighter to $k_i$ reaching $a_i$.
    \end{enumerate}
    As in the previous case, since $l_i\neq l_j$ for $i\neq j$, the paths $\gamma_i,\gamma_j$ can only intersect when the lamps at $l_i g$ and $l_j g $ are off. Since the configurations $x_i$ and $x_j$ differ, it remains that intersections between $\gamma_i$ and $\gamma_j$ can only happen from vertices at step $1$ of the path. The same argument as before now gives that each $\gamma_i$ intersects at most $25n^2$ different $\gamma_j$'s.
\end{enumerate}
    The proof now concludes by diluting the set of paths as before.
\end{proof}

We finish this section by showing that it is possible to get strong retainment in the standard lamplighter group on $\Z$ by protecting an exponential number of vertices with exponent strictly small than the growth rate.
\begin{theo}
  Let $\Gamma$ be the Cayley graph of the lamplighter group $\Z_2 \wr \Z$ w.r.t. the  switch-walk-switch generating set. Then $\Gamma$ satisfies $f(n)=2^{(n+2)/2}$ strong retainment.
\end{theo}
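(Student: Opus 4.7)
The plan is to design an $\{f(n)\}$-allowed strategy that saves a positive fraction of each ball $B_n$ against any finite initial fire $F_0$. Since $v_\Gamma(n) = \Theta(2^n)$ and the cumulative budget $\sum_{k \leq n} f(k) = \Theta((\sqrt{2})^n)$ is exponentially smaller than $v_\Gamma(n)$, the strategy must exploit an exponentially thin separator in $\Gamma$.

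Given $F_0 \subseteq B_R$, every vertex $(L, k) \in F_0$ has $\mathrm{supp}(L) \subseteq [-R, R]$. Fix a cut-off $m > R$ and define the wall $W_m := \{(L, k) \in \Gamma : \mathrm{supp}(L) \subseteq (-\infty, m] \text{ and } k \geq m\}$. The key observation is that a switch-walk-switch generator applied at $(L, k)$ can toggle only the lamps at positions in $\{k - 1, k, k + 1\}$; hence any edge joining a vertex with $\max \mathrm{supp}(L) \leq m$ to a vertex with $\max \mathrm{supp}(L) > m$ must be incident to a vertex of $W_m$. Protecting $W_m$ therefore isolates the fire from the region $A_m := \{(L, k) : \mathrm{supp}(L) \cap (m, \infty) \neq \emptyset\}$, and a density computation yields $|A_m \cap B_n| \geq \tfrac{1}{2} v_\Gamma(n)$ for all large $n$.

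Using the explicit lamplighter distance formula, a geometric sum parameterising vertices of $W_m$ by $k \geq m$ and by the leftward extent of the lamp support gives $|W_m \cap \partial B_t| \sim 2^{(t+m)/2+1}$. The strategy is then, at each time $t$, to protect every vertex of $W_m$ at distance exactly $t$ from $F_0$; by the separation property this prevents the fire from entering $A_m$ and thus saves at least a positive fraction of $B_n$.

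The main obstacle is matching the per-step wall count $2^{(t+m)/2+1}$ against the budget $f(t) = 2^{(t+2)/2}$: the ratio $2^{(m-2)/2}$ is bounded only when $m \leq 2$. For $F_0 = \{e\}$ the choice $m = 1$ works verbatim. For larger $R$ one exploits the geometric cumulative budget $\sum_{k \leq t} f(k) = \Theta((\sqrt{2})^t)$ to protect wall vertices several time steps ahead of the fire, or uses a modified separator depending more delicately on $F_0$; since the strong retainment definition allows the constant $c = c(F_0)$ to depend on $F_0$, a constant-factor loss in the saved region is permissible.
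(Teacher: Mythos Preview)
Your separation idea is correct in spirit---the inner boundary of $A_m^c$ is indeed contained in your wall $W_m$---but the wall is exponentially too thick, and the final paragraph does not repair this. Your own computation gives $|W_m\cap\partial B_t|\sim 2^{(t+m)/2+1}$, so the per-step deficit against $f(t)=2^{(t+2)/2}$ is a multiplicative factor of order $2^{m/2}$. Passing to the cumulative budget does not help: $\sum_{k\le t}f(k)$ is $\Theta(2^{t/2})$, while $|W_m\cap B_t|$ is $\Theta(2^{(t+m)/2})$, so the ratio is still $\Theta(2^{m/2})$. Since $m$ must exceed the radius $R$ of $F_0$, this factor is unbounded, and allowing the saved-fraction constant $c(F_0)$ to shrink is irrelevant---the issue is that you cannot protect the wall in time, not that the saved region is too small. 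The remark that $m=1$ ``works verbatim'' for $F_0=\{e\}$ is already borderline (the ratio is $\sqrt 2$), and no concrete ``modified separator'' is given for larger $F_0$.

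The idea you are missing, and which the paper uses, is to choose a saved region whose boundary \emph{fixes} the entire lamp configuration on $(-\infty,M]$ to one specific pattern (in the paper: all lamps off on the negatives and all lamps on at $0,\dots,M$), rather than merely bounding the support there. Together with pinning the lamplighter to a single position $k=M+2$, this yields a separator $P$ whose only freedom is in the lamps at positions $>M$. The crucial gain is a distance lower bound: any $v\in P$ with a lit lamp at some $i>M+n/2$ has $\dist(v,e)>M+n$, so the fire cannot reach it by time $n$. Hence protecting $P$ in lexicographic order on the right-hand lamps costs at most $2^{(n+2)/2}$ per step \emph{independently of $M$}, which is exactly what is needed to handle arbitrary $F_0$. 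Your $W_m$ allows all $2^{\Theta(\ell)}$ left-side configurations of extent $\ell$, and this is precisely the source of the unrecoverable $2^{m/2}$ blow-up.
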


\begin{proof}
Given an initial fire $F_0\subset \Gamma$, we can find some $r$ such that  $F_0\subset B_r$ and choose a parameter $M>r$.
The set we are going to save (that will never burn) will be the set
\begin{equation}
S=\Big\{ \big( (a_i),k \big) \in \Gamma \, :
\begin{array}{c}
 a_0=a_1=a_2=\dotsb=a_M =1, \\
 a_i = 0, \text{ for all } i<0, \text{ and } k>M+1
\end{array}
\Big\}.
\end{equation}
A straightforward calculation shows that $\frac{|S\cap B_R|}{|B_R|} \rightarrow 2^{-M-3}$ as $R\rightarrow \infty$.\\
To save this set, the vertices we protect will be
\begin{equation}
P=\partial_v^{in} S = \Big\{ \big( (a_i),k \big) \in \Gamma\, :
\begin{array}{c}
 a_i=0, \text{ for all } i<0, \\
 a_0=\dotsb=a_M=1, \text{ and } k=M+2
\end{array}
\Big\}.
\end{equation}
We will protect these vertices in lexicographical order over the rest of the lamps $a_{M+1}$, $a_{M+2}$ ..., protecting by time $n$ all vertices of the form $P_n= \big\{ \big( (a_i),k \big) \in P \, : \, a_i=0 \, \text{ for all } i > M+n/2 \big\}$. This can be done by protecting $2^{(n+2)/2}$ vertices on turn $n$.
It now remains to show that the set $S$ never catches fire. Assume by contradiction that some vertices in $S$ do catch fire, and let $v$ be one of the vertices of $S$ that catches fire \textit{first}, and denote by $n$ the time at which it catches fire. Then $v\in \partial_v^{in} S=P$, and in particular $v$ is of the form $v=(\underbar{a},M+2)$. Since $v$ caught fire at time $n$, it cannot belong to $P_n$, hence there is some $i>M+n/2$ for which $a_i=1$. This implies $\dist(v,e)>M+n$ which in turn implies $\dist(v,F_0)>n$, contradicting the fact that $v$ caught fire at time $n$.
\end{proof}

\appendix

\section{Isoperimetry}\label{app:isoperimetry}
~
\par For a function $f:G \to \R$, define its discrete derivative at a given oriented edge $e=(e^{-}, e^{+})$ as
\begin{equation}
\nabla f (e) = f(e^{+})-f(e^{-}).
\end{equation}
Moreover, for a subset $B \subset G$, let
\begin{equation}
|| f ||_{L^{1}(B)} = \sum_{x \in B} |f(x)|,
\end{equation}
and
\begin{equation}
|| \nabla f ||_{L^{1}(B)} = \sum_{e \in \vec{E}(B)} |\nabla f(e)|,
\end{equation}
where $\vec{E}(B)= \{e=(e^{-},e^{+}): \, e^{-}, e^{+} \in B \text{ and } \{e^{-}, e^{+}\} \in E(G) \}$.
\begin{lemma}\label{lemma:poincare}
Let $G$ be a finitely generated group. For every integer $R \geq 1$ and function $f:B_{3R} \to \R$,
\begin{equation}
|| f-f_{R} ||_{L^{1}(B_{R})} \leq 2R\frac{v(2R)}{v(R)} ||\nabla f||_{L^{1}(B_{3R})},
\end{equation}
where $f_{R}$ denotes the average of $f$ over $B_{R}$
\begin{equation}
f_{R}= \frac{1}{v(R)}\sum_{x \in B_{R}}f(x).
\end{equation}
\end{lemma}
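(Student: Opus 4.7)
The plan is to prove the Poincaré-type inequality by the standard path-averaging argument, exploiting the translation invariance of the Cayley graph to keep the constant sharp. I would begin with the trivial bound
\begin{equation}
|f(x) - f_R| \le \frac{1}{v(R)}\sum_{y \in B_R} |f(x) - f(y)|,
\end{equation}
so that after summing over $x \in B_R$ it is enough to estimate the double sum $\sum_{x,y \in B_R} |f(x)-f(y)|$ by an $L^1$-norm of $\nabla f$. For each pair $(x,y) \in B_R \times B_R$ one connects them by a path $\gamma_{x,y}$ and telescopes: $|f(x)-f(y)| \le \sum_{e \in \gamma_{x,y}} |\nabla f(e)|$.

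The crucial choice is a translation-invariant system of paths. I would fix, once and for all, a geodesic $\sigma_g$ from $e_G$ to $g$ for every $g \in G$, and define $\gamma_{x,y}$ to be the translate $x \cdot \sigma_{x^{-1}y}$. Since $|x^{-1}y| \le 2R$ and $x \in B_R$, every vertex of $\gamma_{x,y}$ lies in $B_{3R}$, so all edges used belong to $\vec{E}(B_{3R})$. Exchanging the order of summation gives
\begin{equation}
\sum_{x,y \in B_R}\sum_{e \in \gamma_{x,y}}|\nabla f(e)| = \sum_{e \in \vec E(B_{3R})} |\nabla f(e)| \cdot N(e),
\end{equation}
where $N(e) = \#\{(x,y) \in B_R \times B_R : e \in \gamma_{x,y}\}$.

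The main step is to bound $N(e)$ uniformly in $e$. If the oriented edge $e=(e^-,e^+)$ is traversed by $\gamma_{x,y}$, then there exist $z = x^{-1}y \in B_{2R}$ and an index $0 \le i < |z|$ with $e^- = x\cdot\sigma_z(i)$; hence $x = e^- \cdot \sigma_z(i)^{-1}$, and both $x$ and $y = xz$ are determined by the pair $(z,i)$. Therefore
\begin{equation}
N(e) \le \sum_{z \in B_{2R}} |z| \le 2R \cdot v(2R),
\end{equation}
a bound independent of $e$ — this is exactly the place where homogeneity of the Cayley graph enters. Substituting back and dividing by $v(R)$ yields
\begin{equation}
\sum_{x\in B_R}|f(x)-f_R| \le \frac{1}{v(R)}\cdot 2R\,v(2R)\cdot \|\nabla f\|_{L^1(B_{3R})},
\end{equation}
which is the claim.

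There is not really a serious obstacle here; the one point that must be handled carefully is organizing the path system so that the counting is clean, and in particular making sure that the edge $(e^-,e^+)$ together with its position $i$ along the translated geodesic $\sigma_z$ recovers $x$ uniquely. The translation-invariant choice $\gamma_{x,y} = x \cdot \sigma_{x^{-1}y}$ is exactly what makes this work and delivers the ratio $v(2R)/v(R)$ in the constant.
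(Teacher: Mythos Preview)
Your proof is correct and follows essentially the same path-averaging argument as the paper: both fix translation-invariant geodesics $\sigma_z$ from $e$ to $z$, telescope $|f(x)-f(y)|$ along $x\cdot\sigma_{x^{-1}y}$, and bound the multiplicity using that $(z,i)$ determines $x$. The only cosmetic difference is that the paper routes the counting through the vertex quantity $\delta f(x)=\sum_{y\sim x}|f(y)-f(x)|$ and splits the bound into two stages (first fixing $w=x^{-1}y$ to get a $2R$-to-one map, then summing over $w\in B_{2R}$), whereas you count edge multiplicities $N(e)$ directly in one step; the arithmetic and the resulting constant are identical.
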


\par The proof follows the steps from~\cite{kleiner}, which proves an analogous inequality for the $L^{2}$ norm.
\begin{proof}
Fix an integer $R \geq 1$. Define, $\delta f: B_{3R} \to \R$ as
\begin{equation}
\delta f(x)=\sum_{y \sim x, \, y \in B_{3R}}|f(y)-f(x)|,
\end{equation}
and notice that $||\delta f||_{L^{1}(B_{3R})} = ||\nabla f||_{L^{1}(B_{3R})}$.
For every $y \in G$, consider a path of minimal length $\gamma_{y}:\{1, \dots, |y|\} \to G$ from the identity element $e \in G$ to $y$. If $y \in B_{2R}$, then
\begin{equation}
\sum_{x \in B_{R}} \sum_{i=1}^{|y|}\delta f(x\gamma_{y}(i)) \leq 2R \sum_{z \in B_{3R}} \delta f(z),
\end{equation}
since the map from $B_{R} \times \{1, \dots |y|\}$ to $B_{3R}$ that maps $(x,i)$ to $x\gamma_{y}(i)$ is at most $2R$ to one.

Observe now that, for each $x \in B_{R}$,
\begin{equation}
\begin{split}
\sum_{x \in B_{R}}|f(x)-f_{R}| & \leq \frac{1}{v(R)}\sum_{x \in B_{R}}\sum_{y \in B_{R}}|f(x)-f(y)| \\
& \leq \frac{1}{v(R)}\sum_{x \in B_{R}} \sum_{y \in B_{R}} \sum_{i=1}^{|x^{-1}y|} \delta f(x\gamma_{x^{-1}y}(i)) \\
& \leq \frac{1}{v(R)}\sum_{x \in B_{R}} \sum_{w \in B_{2R}} \sum_{i=1}^{|w|} \delta f(x\gamma_{w}(i)) \\
& \leq \frac{1}{v(R)}\sum_{w \in B_{2R}} 2R \sum_{z \in B_{3R}} \delta f(z) \\
& \leq 2R \frac{v(2R)}{v(R)}  ||\nabla f||_{L^{1}(B_{3R})},
\end{split}
\end{equation}
concluding the proof.
\end{proof}

\nc{c:growth}

  The lemma above can be used to deduce isoperimetric inequalities for subsets of balls of Cayley graphs.
  Recall from~\eqref{eq:vertex_boundary} that $\partial A$ denotes the vertex boundary of $A \subset G$.
  If we consider $f=\charf{A}$, it is a straightforward calculation that
\begin{equation}\label{eq:nabla}
||\nabla f||_{L^{1}(B_{R})} \leq 2|S| \cdot \left|B_{R} \cap \partial A\right|.
\end{equation}
Besides,
\begin{equation}\label{eq:average}
|| f-f_{R} ||_{L^{1}(B_{R})} = \frac{2}{v(R)}|B_{R} \setminus A| \cdot |A \cap B_{R}|.
\end{equation}
Combining the two equations above can be combined with Lemma~\ref{lemma:poincare} we write
\begin{align*}
2|S| \cdot \left|B_{3R} \cap \partial A\right|
&\stackrel{\textrm{(\ref{eq:nabla})}}{\ge}
||\nabla f||_{L^{1}(B_{3R})}
\stackrel{\textrm{\ref{lemma:poincare}}}{\ge}
\frac{v(R)}{2Rv(2R)}|| f-f_{R} ||_{L^{1}(B_{R})}\\
&\stackrel{\textrm{(\ref{eq:average})}}{=}
\frac{1}{Rv(2R)}|B_{R} \setminus A| \cdot |A \cap B_{R}|.
\end{align*}
We have thus obtained the following isoperimetric inequality.
\begin{lemma}\label{lemma:isoperimetry}
There exists a positive constant $\uc{c:growth}>0$ such that, for all subsets $A \subset G$,
\begin{equation}
\left|B_{3R} \cap \partial A\right| \geq \frac{\uc{c:growth}}{R \cdot v(2R)}|B_{R} \setminus A| \cdot |A \cap B_{R}|.
\end{equation}
\end{lemma}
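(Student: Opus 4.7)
My plan is to deduce Lemma~\ref{lemma:isoperimetry} directly from the $L^1$-Poincaré-type inequality in Lemma~\ref{lemma:poincare} by applying it to the indicator function $f=\charf{A}$ on the ball $B_{3R}$. The key point is that for this choice of $f$ both sides of the Poincaré inequality have concrete combinatorial meaning: the right-hand side is controlled by edges crossing the boundary of $A$ inside $B_{3R}$, while the left-hand side measures how far $\charf{A}$ is from its average on $B_R$, which is exactly the product $|A\cap B_R|\cdot |B_R\setminus A|$ (normalised by $v(R)$). The proof is then just a matter of plugging in and simplifying.

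First I would bound $\|\nabla f\|_{L^1(B_{3R})}$. Since $\nabla f(e)\in\{-1,0,1\}$ and is non-zero only on oriented edges $e=(e^-,e^+)$ whose endpoints straddle $\partial A$, each vertex $v\in B_{3R}\cap \partial A$ contributes at most $2|S|$ to the sum (at most $|S|$ neighbours in each orientation). This gives the bound
\[
\|\nabla f\|_{L^1(B_{3R})} \leq 2|S|\cdot |B_{3R}\cap \partial A|,
\]
which is Equation~\eqref{eq:nabla}. Next I would compute $\|f-f_R\|_{L^1(B_R)}$ directly. The average is $f_R = |A\cap B_R|/v(R)$, so for $x\in A\cap B_R$ we have $|f(x)-f_R| = |B_R\setminus A|/v(R)$, and for $x\in B_R\setminus A$ we have $|f(x)-f_R| = |A\cap B_R|/v(R)$. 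Summing yields
\[
\|f-f_R\|_{L^1(B_R)} = \frac{2}{v(R)}|A\cap B_R|\cdot |B_R\setminus A|,
\]
which is Equation~\eqref{eq:average}.

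Finally I would plug these two identities into Lemma~\ref{lemma:poincare}, obtaining
\[
\frac{2}{v(R)}|A\cap B_R|\cdot |B_R\setminus A| \leq 2R\frac{v(2R)}{v(R)}\cdot 2|S|\cdot |B_{3R}\cap \partial A|,
\]
and isolating $|B_{3R}\cap\partial A|$ gives the claimed inequality with $\uc{c:growth} = 1/(2|S|)$. There is no serious obstacle here since Lemma~\ref{lemma:poincare} has already been established; the only thing to be careful about is the factor $v(2R)/v(R)$, which does \emph{not} collapse to a constant for arbitrary growth but nevertheless is harmless because the statement of the isoperimetric inequality retains this factor $v(2R)$ on the right-hand side. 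In the intended application (polynomial growth of degree $d$) this ratio is bounded by an absolute constant, so the product $R\cdot v(2R)$ in the denominator behaves like $R^{d+1}$, which is precisely the scale needed to produce the $n^{d-1}$ lower bound on the in-ball boundary driving the proof of Theorem~\ref{t:fire_retaining}.
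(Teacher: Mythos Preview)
Your proof is correct and follows essentially the same approach as the paper: apply Lemma~\ref{lemma:poincare} to $f=\charf{A}$, identify $\|\nabla f\|_{L^1(B_{3R})}$ with the boundary count via~\eqref{eq:nabla} and $\|f-f_R\|_{L^1(B_R)}$ with the product $|A\cap B_R|\cdot|B_R\setminus A|$ via~\eqref{eq:average}, and rearrange. You even arrive at the same explicit constant $\uc{c:growth}=1/(2|S|)$.
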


\bibliographystyle{plain}
\bibliography{mybib}

\begin{thebibliography}{1}

\bibitem{abk}
Gideon Amir, Rangel Baldasso, and Gady Kozma.
\newblock The firefighter problem on polynomial and intermediate growth groups.
\newblock {\em Discrete Mathematics}, 343(11):112077, 2020.

\bibitem{dh}
Mike Develin and Stephen~G. Hartke.
\newblock Fire containment in grids of dimension three and higher.
\newblock {\em Discrete Applied Mathematics}, 155(17):2257--2268, 2007.

\bibitem{dmt}
Danny Dyer, Eduardo Mart{\'\i}nez-Pedroza, and Brandon Thorne.
\newblock The coarse geometry of {H}artnell’s firefighter problem on infinite
  graphs.
\newblock {\em Discrete Mathematics}, 340(5):935--950, 2017.

\bibitem{hartnell}
Bert Hartnell.
\newblock Firefighter! {A}n application of domination.
\newblock In {\em The 24th Manitoba Conference on Combinatorial Mathematics and
  Computing, University of Minitoba, Winnipeg, Canada, 1995}, 1995.

\bibitem{Kapovich_boundariesof}
Ilya Kapovich and Nadia Benakli.
\newblock Boundaries of hyperbolic groups.

\bibitem{kleiner}
Bruce Kleiner.
\newblock A new proof of {G}romov’s theorem on groups of polynomial growth.
\newblock {\em Journal of the American Mathematical Society}, 23(3):815--829,
  2010.

\bibitem{ln}
Florian Lehner.
\newblock Firefighting on trees and {C}ayley graphs.
\newblock {\em Australasian Journal of Combinatorics}, 75(1):66--72, 2019.

\bibitem{mpp}
Eduardo Mart{\'\i}nez-Pedroza and Tomasz Prytu{\l}a.
\newblock Coarse geometry of the fire retaining property and group splittings.
\newblock {\em arXiv preprint arXiv:1904.04658}, 2019.

\end{thebibliography}

\end{document}